\setheadfoot{\onelineskip}{2\onelineskip} 
  \setlist{nosep}
  \tikzset{
biml/.tip={Glyph[glyph math command=triangleleft, glyph length=.95ex]},
bimr/.tip={Glyph[glyph math command=triangleright, glyph length=.95ex]},
}
\tikzset{
	tick/.style={postaction={
  	decorate,
    decoration={markings, mark=at position 0.5 with
    	{\draw[-] (0,.4ex) -- (0,-.4ex);}}}
  }
} 
\tikzset{
	slash/.style={postaction={
  	decorate,
    decoration={markings, mark=at position 0.5 with
    	{\node[font=\footnotesize] {\rotatebox{90}{$\sim$}};}}}
  }
}
\newcommand{\bifrom}[1][]{
	\begin{tikzcd}[ampersand replacement=\&, cramped]\ar[r, bimr-biml, "{#1}"]\&~\end{tikzcd}  
}
\theoremstyle{definition}
\newtheorem{definitionx}{Definition}[chapter]
\theoremstyle{plain}
\newtheorem{theorem}[definitionx]{Theorem}
\newtheorem{corollary}[definitionx]{Corollary}
\newtheorem{lemma}[definitionx]{Lemma}
\newtheorem*{theorem*}{Theorem}
\newtheorem*{proposition*}{Proposition}
\newtheorem*{corollary*}{Corollary}
\newtheorem*{lemma*}{Lemma}
\newtheorem*{warning*}{Warning}
\newenvironment{example}
  {\pushQED{\qed}\examplex}
  {\popQED\endexamplex}
 \newenvironment{remark}
  {\pushQED{\qed}\remarkx}
  {\popQED\endremarkx}
  \newenvironment{definition}
  {\pushQED{\qed}\definitionx}
  {\popQED\enddefinitionx}
\DeclareSymbolFont{stmry}{U}{stmry}{m}{n}
\DeclareMathSymbol\fatsemi\mathop{stmry}{"23}
\DeclareFontFamily{U}{mathx}{\hyphenchar\font45}
\DeclareFontShape{U}{mathx}{m}{n}{
      <5> <6> <7> <8> <9> <10>
      <10.95> <12> <14.4> <17.28> <20.74> <24.88>
      mathx10
      }{}
\DeclareSymbolFont{mathx}{U}{mathx}{m}{n}
\DeclareMathAccent{\widecheck}{0}{mathx}{"71}
\DeclareMathOperator{\Hom}{Hom}
\DeclareMathOperator*{\colim}{colim}
\DeclareMathOperator{\ob}{Ob}
\newcommand{\cat}[1]{\mathcal{#1}}
\newcommand{\Cat}[1]{\mathbf{#1}}
\newcommand{\id}{\mathrm{id}}
\newcommand{\To}[2][]{\xrightarrow[#1]{#2}}
\newcommand{\Tto}[3][13pt]{\begin{tikzcd}[sep=#1, cramped, ampersand replacement=\&, text height=1ex, text depth=.3ex]\ar[r, shift left=2pt, "#2"]\ar[r, shift right=2pt, "#3"']\&{}\end{tikzcd}}
\newcommand{\from}{\leftarrow}
\newcommand{\From}[1]{\xleftarrow{#1}}
\newcommand{\card}{\,^{\#}}
\newcommand{\op}{^\tn{op}}
\newcommand{\tn}[1]{\textnormal{#1}}
\newcommand{\ul}[1]{\underline{#1}}
\newcommand{\nn}{\mathbb{N}}
\newcommand{\rr}{\mathbb{R}}
\newcommand{\smset}{\Cat{Set}}
\newcommand{\smcat}{\Cat{Cat}}
\newcommand{\lgcat}{\Cat{CAT}}
\newcommand{\ccatsharp}{\mathbb{C}\Cat{at}^{\sharp}}
\newcommand{\set}{\tn{-}\Cat{Set}}
\newcommand{\yon}{{\mathcal{y}}}
\newcommand{\poly}{\Cat{Poly}}
\newcommand{\tri}{\mathbin{\triangleleft}}
\newcommand{\qand}{\quad\text{and}\quad}
\newcommand{\qqand}{\qquad\text{and}\qquad}
\newcommand{\alg}{\tn{-}\Cat{Alg}}
\renewcommand{\O}{{\mathcal{O}}}
\renewcommand{\path}{\textit{path}}
\newcommand{\List}{{\textit{list}}}
\newcommand{\lott}{{\textit{lott}}}
\newcommand{\smc}{\textit{smc}}
\newcommand{\sm}{\textit{sm}}
\newcommand{\nerve}{\textit{nerve}}
\newcommand{\edge}{\mathrm{E}}
\newcommand{\vertex}{\mathrm{V}}
\newcommand{\EM}{\Cat{EM}}
\newcommand{\K}{\mathbb{K}}
\newcommand{\biglens}[2]{
     \begin{bmatrix}{\vphantom{f_f^f}#2} \\ {\vphantom{f_f^f}#1} \end{bmatrix}
}
\newcommand{\littlelens}[2]{
     \begin{bsmallmatrix}{\vphantom{f}#2} \\ {\vphantom{f}#1} \end{bsmallmatrix}
}
\newcommand{\lens}[2]{
  \relax\if@display
     \biglens{#1}{#2}
  \else
     \littlelens{#1}{#2}
  \fi
}
\newcommand{\scdots}[2][]{\mathinner{#1\overset{#2}{\cdots}#1}}
\newcommand{\comod}{\mathbb{C}\Cat{omod}}
\newcommand{\minus}{\mathbin{\tn{-}}}
\begin{document}

\title{A Polynomial Construction of Nerves for Higher Categories}

\author{Brandon T. Shapiro \and David I. Spivak}

\date{\vspace{-.2in}}

\maketitle

\begin{abstract}
We show that the construction due to Leinster and Weber of a generalized Lawvere theory for a familially representable monad on a (co)presheaf category, and the associated ``nerve'' functor from monad algebras to (co)presheaves, have an elegant categorical description in the double category $\ccatsharp$ of categories, cofunctors, familial functors, and transformations. In $\ccatsharp$, which also arises from comonoids in the category of polynomial functors, both a familial monad and a (co)presheaf it acts on can be modeled as horizontal morphisms; from this perspective, the theory category associated to the monad is built using left Kan extension in the category of endomorphisms, and the nerve functor is modeled by a single composition of horizontal morphisms in $\ccatsharp$. For the free category monad $\path$ on graphs, this provides a new construction of the simplex category as $\Delta \coloneqq \lens{\path}{\path \circ \path}$. We also explore the free Eilenberg-Moore completion of $\ccatsharp$, in which constructions such as the free symmetric monoidal category monad on $\smcat$ can modeled using the rich language of polynomial functors.
\end{abstract}

\renewcommand\cftbeforechapterskip{2pt plus 1pt}

\begin{KeepFromToc}
\tableofcontents
\end{KeepFromToc}

\chapter{Introduction}

One of the main features of categorical algebra is its ability to exhibit often quite complicated algebraic structures in a multitude of different forms, giving us the choice of which form is most elegant or convenient for any given purpose. So successful is this project that most algebraic structures can be modeled in forms that are barely recognizable as ``algebraic'' at all, instead taking on a more topological or combinatorial flavor which can be quite useful for intuition, proofs, and computation. 

Lawvere in his thesis \cite{Lawvere:2004} introduced the notion of representing an algebraic theory as a category generated by a single object under finite products (now called a \emph{Lawvere theory}). For a finitary monad on the category of sets there is a corresponding Lawvere theory given by the opposite of a skeleton of the category of finitely generated free algebras, and an equivalence between algebras of the monad and product preserving functors from the Lawvere theory to $\smset$. Such functors to $\smset$, which we henceforth call \emph{copresheaves} on the Lawvere theory, can be interpreted as a combinatorial expression of the data of an algebra for the monad.

Categories have also long been studied as combinatorial objects using the fully faithful \emph{nerve} functor from $\smcat$ to the category of simplicial sets, namely contravariant functors from the simplex category $\Delta$ to $\smset$ (\emph{presheaves}), sending a category to its sets of length-$n$ sequences of adjacent morphisms equipped with functions for restricting, composing, and inserting identities into these sequences to change their length. This construction also has a topological interpretation, where a sequence of $n$ adjacent morphisms in a category along with the composites of all of its sub-sequences can be regarded as a topological $n$-simplex. Nerve functors have also been defined for various forms of higher categories such as $n$-cellular nerves of $n$-categories \cite{berger2002cellular} and dendroidal nerves of multicategories \cite{moerdijk2007dendroidal}, and are widely used to study higher dimensional algebraic objects using homotopy theory.

Leinster \cite{Leinster:blog} and Weber \cite{weber2007familial} unified many of these constructions\footnote{including those for (higher) categories and polynomial (but not all finitary) monads on $\smset$. Further generalizations were later given in \cite{berger2012monads,bourke2019monads}.} by defining for any \emph{familial} monad $m$ on a (co)presheaf category a \emph{theory category} $\Theta_m$ and a fully faithful nerve functor from $m$-algebras to presheaves on $\Theta_m$. Familial monads are cartesian monads whose algebras are (co)presheaves equipped with operations for composing any diagram of a fixed shape (\emph{arity}) into a single cell (see for instance \cite[Appendix C]{Leinster:2004a}), and include all polynomial monads on $\smset$, the free category monad on graphs, and monads whose algebras correspond to several different types of higher categories. In \cref{smc} we describe a familial monad on graphs whose algebras are symmetric monoidal categories.

The theory category $\Theta_m$ is defined as the full subcategory of $m$-algebras spanned by the free algebras on the arity diagrams, and the nerve functor is given by the restricted Yoneda embedding which identifies for each arity the set of such diagrams in an $m$-algebra. For the free category monad, the arities are the graphs $\vec n$ consisting of $n+1$ vertices and $n$ adjacent edges forming a directed path. The theory category is then the full subcategory of $\smcat$ spanned by the free categories on these graphs, which are the nonempty finite ordinal categories $[n]$ that form the objects of the simplex category.

This construction of theories and nerves, producing for a broad class of widely-studied algebraic structures a single choice of combinatorial representation, endows its outputs with an air of \emph{canonicity}. Embeddings of the same structures into categories of combinatorial objects with desireable properties can be alternatively achieved, such as by cubical nerves of categories, simplicial or $n$-simplicial nerves of $n$-categories, and truncated versions thereof. The choices of Lawvere theories of finitary polynomial monads, simplicial nerves of categories, cellular nerves of $n$-categories, and dendroidal nerves of multicategories, among others, are elevated in the categorical landscape by their association to the corresponding monads via this general construction. In other words (of Leinster in \cite{Leinster:blog}), it answers the question of ``Why is the definition [of the nerve] exactly that, not something slightly different?''

The purpose of this paper is to address the same question for Leinster and Weber's construction itself: in what sense is it canonical, and not easily replaced by something else? Is there a way of associating to a familial monad its theory category and nerve functor in a manner that is significant in the language of category theory? 

The answer we give lies in the formal category theory inside the double category $\ccatsharp$, whose objects are small categories, vertical morphisms are \emph{cofunctors}, and horizontal morphisms $c \bifrom d$ are given by the data underlying familial functors from the category of copresheaves on $d$ to that of copresheaves on $c$. This double category has rich categorical structure such as left Kan extensions, arising from its equivalent construction (see \cref{chap.catsharp}) as the double category of comonoids in the monoidal category $\poly$ of polynomial endofunctors on $\smset$ under composition, and familial monads arise as formal monads in $\ccatsharp$ (in the sense of \cite{Street:1972a} and \cite[Definition 5.4.1]{benabou1967introduction}). We can also model an algebra of a familial monad $m$ on $c$-copresheaves as a horizontal morphism $c \bifrom 0$ from the empty category, where $0$-copresheaves form the terminal category, equipped with a left $m$-action in $\ccatsharp$.

\begin{theorem*}[{\cref{coclosuretheory,nervecoalgebra,isnerve}}]
For $c$ a small category and $m$ a familial monad on $c$-copresheaves, the theory category $\Theta_m$ is dual to the category $\lens{m}{m \circ m}$ built in a canonical way (\cref{comonadcofunctor}) from the left Kan extension of the underlying familial functor $m$ along the composite $m \circ m$.

For $m(X) \to X$ an $m$-algebra, its nerve has the same underlying set of elements as $m(X)$ and the $\Theta_m$-presheaf structure on that set is determined in a canonical way (\cref{lenscoalgebras}) from a coalgebra structure on $m(X)$ built out of the algebra structure map of $X$ and the universal transformation associated to the left Kan extension $\lens{m}{m \circ m}$ as in \eqref{eqn.intro}.
\end{theorem*}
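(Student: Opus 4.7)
The plan is to split the theorem into the three assertions it makes and handle each using the formal-categorical machinery the paper has been building inside $\ccatsharp$: first, that $\Theta_m$ arises via left Kan extension; second, that the nerve has underlying data $m(X)$; and third, that its presheaf structure is coalgebraic.

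For the first assertion, I would begin inside the monoidal category of horizontal endomorphisms of $c$ in $\ccatsharp$, where composition of polynomials models composition of familial functors. Writing $m \circ m$ as a horizontal endomorphism, the left Kan extension of $m$ along $m \circ m$ exists by the general facts about $\ccatsharp$ alluded to in the introduction, and its value is denoted $\lens{m}{m \circ m}$. By the cited \cref{comonadcofunctor}, this lens admits a canonical comonad (equivalently, cofunctor) structure coming from the monad structure on $m$, so that it determines a small category. To identify its opposite with $\Theta_m$, I would unpack positions and directions: positions of $m$ (as a polynomial/familial functor) are precisely the arity diagrams, hence correspond to objects of $\Theta_m$, while a morphism in $\lens{m}{m \circ m}$ between two arities unpacks, via the universal property of the Kan extension, to an assignment of a sub-arity-diagram inside a composite $m \circ m$-diagram — exactly the data of a morphism of free $m$-algebras on arities in the opposite direction.

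For the second and third assertions, I would take an $m$-algebra $\alpha\colon m(X) \to X$ and view $X$ as a horizontal morphism $c \bifrom 0$, so that $m(X)$ arises as the composite $m \circ X$ and the algebra structure $\alpha$ is a 2-cell in $\ccatsharp$. Applying \cref{lenscoalgebras} to the pair consisting of $\alpha$ and the universal left-Kan-extension 2-cell $m \circ m \Rightarrow \lens{m}{m\circ m} \circ m$ produces a coalgebra structure map $m(X) \to \lens{m}{m\circ m} \circ m(X)$, i.e.\ a coaction of $\lens{m}{m\circ m}$ on the horizontal morphism $m(X)\colon c \bifrom 0$. This is the single horizontal composition referenced in the abstract. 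Using the general identification of coalgebras over a category (viewed as a comonad in $\ccatsharp$) with presheaves on that category, this coaction is precisely a $\Theta_m$-presheaf whose underlying set of $\theta$-cells, for each arity $\theta$, is the set of $\theta$-shaped elements of $m(X)$.

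To conclude, I would check that this presheaf agrees with the Leinster--Weber nerve, namely the restricted Yoneda image $\Hom_{m\text{-}\Cat{Alg}}(m(\theta),X)$ evaluated at each arity $\theta$. This reduces to two compatibilities: on the one hand, elements of the set $m(X)$ at position $\theta$ are in natural bijection with $m$-algebra maps from the free algebra $m(\theta)$ to $X$, by the standard adjunction and the fact that $m$ is familial; on the other hand, the coaction built from $\alpha$ and the universal Kan-extension transformation encodes exactly the substitution/composition of arity diagrams that underlies the $\Theta_m$-action on such hom-sets.

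The main obstacle I anticipate is the last compatibility check: verifying that the coaction produced by the formal recipe of \cref{lenscoalgebras} recovers, on the nose, the restriction maps $\theta' \to \theta$ in $\Theta_m$ act on elements of $m(X)$ by substitution through $\alpha$. This requires carefully matching the universal 2-cell witnessing the Kan extension against the Leinster--Weber description of morphisms in $\Theta_m$ as inclusions of sub-diagrams into $m$-composites of arities, and then tracing through the composition in $\ccatsharp$ to see that the algebra map $\alpha$ is applied exactly where it should be. Once this identification is pinned down for a single arity morphism, naturality and the coassociativity guaranteed by \cref{comonadcofunctor} extend it to the full presheaf structure.
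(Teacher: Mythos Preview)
Your overall strategy matches the paper's: identify $\lens{m}{m\circ m}$ with $\Theta_m\op$ by unpacking positions and directions (the paper does this via the slightly more general \cref{coclosurekleisli}, but the argument is the same), build a coaction on $m\tri_c X$ from the Kan-extension unit and the algebra map, and then invoke \cref{lenscoalgebras} to read this off as a $\Theta_m$-presheaf, finally checking it agrees with Weber's nerve by tracing the Kleisli composition. Two points need correction, one minor and one substantive.

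First, the universal 2-cell is misstated. The unit of the adjunction $\lens{-}{m\tri_c m}\dashv -\tri_c(m\tri_c m)$ is a map
\[
m \;\Longrightarrow\; \lens{m}{m\tri_c m}\tri_c m\tri_c m,
\]
not a map $m\circ m\Rightarrow\lens{m}{m\circ m}\circ m$ as you wrote. The coaction on $m\tri_c X$ is then obtained by whiskering this unit with $X$ and post-composing with $\id\tri_c\id\tri_c\alpha$, where $\alpha\colon m\tri_c X\to X$ is the algebra structure; this is exactly the composite displayed in \eqref{eqn.intro}. Relatedly, \cref{lenscoalgebras} does not \emph{produce} this coaction from the pair $(\alpha,\text{unit})$; it only translates an already-constructed left comodule into a bicomodule over the carrier comonoid. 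The construction of the coaction is a separate step (the content of \cref{nervecoalgebra}).

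Second, and more importantly, you never verify that the map $m\tri_c X\to\lens{m}{m\tri_c m}\tri_c m\tri_c X$ you build is actually a left comodule, i.e.\ compatible with the counit and comultiplication of $\lens{m}{m\tri_c m}$. You write that ``coassociativity [is] guaranteed by \cref{comonadcofunctor}'', but that lemma only supplies the comonoid structure on $\lens{m}{m\tri_c m}$ itself; compatibility of the \emph{coaction} with that structure is a further check, and it is precisely here that the unit and associativity axioms of the $m$-algebra $X$ are used. In the paper's proof of \cref{nervecoalgebra} this is the entire substance: counitality of the coaction reduces (via the universal property of the coclosure) to the unit law $\alpha\circ(\eta^m\tri_c X)=\id$, and coassociativity reduces to the associativity law $\alpha\circ(\mu^m\tri_c X)=\alpha\circ(m\tri_c\alpha)$. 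Without this step you have only a map, not a comodule, and \cref{lenscoalgebras} cannot be applied.
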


\begin{equation}\label{eqn.intro}
\begin{tikzcd}
c \ar[bimr-biml,""{name=S, below}]{rrr}{m} \ar[bimr-biml]{dr}[swap,outer sep=-1.5]{\lens{m}{m \circ m}} & & & c \rar[bimr-biml]{X} \dar[Rightarrow,shorten >=9,shorten <=3] & 0 \\
& c \rar[bimr-biml,""{name=T, above},swap]{m} & c \ar[bimr-biml]{ur}[swap,outer sep=-1.5]{m} \ar[bimr-biml,bend right=20]{urr}[swap]{X} & {}
\arrow[Rightarrow,shorten=4,from=S,to=T]
\end{tikzcd}
\end{equation}

These results demonstrate that the construction of theory categories is endowed with a universal property (\cref{coclosuretheory}) and provides yet another piece of evidence for Mac Lane's slogan that ``all concepts are Kan extensions.'' Moreover, using this description of $\Theta_m$ the nerve of an $m$-algebra $X$ admits (via the formal categorical result in \cref{lenscoalgebras}) the simple formal construction of \eqref{eqn.intro} that could apply in any double category of comonoids with left Kan extensions.

This formulation makes extensive use of the monoidal category $\poly$, particularly in the constructions of left Kan extensions (which arise from the right monoidal coclosure in $\poly$) and the category associated to a familial comonad such as $\lens{m}{m \circ m}$ in \cref{comonadcofunctor}. It also adds nerves of higher categories to the fast-growing repertoire of pure and applied mathematics which can be elegantly modeled using the category of polynomial functors, such as database theory \cite{spivak2021functorial}, open dynamical systems \cite{shapiro2022dynamic}, and double categorical copresheaves \cite[Example 5.20]{shapiro2023structures}.

We conclude with a discussion of the free completion under Eilenberg-Moore objects \cite{lack2002formal} of the underlying bicategory of $\ccatsharp$. The resulting bicategory has familial monads as objects, morphisms which correspond to a generalization of familial functors to categories of algebras, and 2-cells corresponding to all natural transformations between such functors. Algebras for formal monads in this bicategory, such as the free symmetric monoidal category monad on $\smcat$ in \cref{sm}, are famlial monad algebras with additional operations, and by formal results of Lack and Street in \cite{lack2002formal} these enhanced structures are always algebras for a ``composite'' ordinary familial monad (see \cref{smwreath}).

\section*{Notation}

The symbol $\sum$ denotes an indexed coproduct, the symbol $+$ denotes binary coproduct, and $0$ denotes an initial object. 

For $c$ a category, we write $c\set$ for the category of copresheaves on $c$, meaning functors $c \to \smset$. For $X$ a $c$-copresheaf and $C \in c(1)$ an object, we write $X_C$ for $X(C)\in\smset$.

\section*{Acknowledgments}

This material is based upon work supported by the Air Force Office of Scientific Research under award number FA9550-20-1-0348. We also appreciate the comments of our ACT2023 conference reviewers, in particular Reviewer uG3q.

\chapter{The Double Category $\ccatsharp$}\label{chap.catsharp}

We begin by recalling the construction of the double category $\ccatsharp$ using polynomial functors, then prove some structural results that will facilitate the description of higher categorical nerves in $\ccatsharp$.

\section{The category of polynomials}

In the theory of polynomial functors, endofunctors $\smset \to \smset$ which arise as coproducts of representables are often considered purely in terms of the combinatorial data of their representing sets, which formally resemble the exponents of a classical polynomial.

\begin{definition}
A \emph{polynomial} $p$ consists of a set $p(1)$ along with, for each element $I \in p(1)$, a set $p[I]$. We write
\[
p = \sum_{I \in p(1)} \yon^{p[I]}
\]
for such a polynomial. A morphism $\phi$ of polynomials $p \to q$ is a natural transformation. It can be cast set-theoretically as consisting of a function $\phi_1 \colon p(1) \to q(1)$ along with, for each $I \in p(1)$, a function $\phi^\#_I \colon p[I] \from q[\phi_1I]$. We write $\poly$ for the category of polynomials.
\end{definition}

Elements of the set $p(1)$ are called \emph{positions} of $p$, and for each $I\in p(1)$, elements of $p[I]$ are called \emph{directions}. A morphism $\phi$ is called \emph{cartesian} if each $\phi^\#_I$ is a bijection, and \emph{vertical} if $\phi_1$ is a bijection.

\begin{definition}[{\cite[Proposition 2.1.7]{spivak2021functorial}}]
We denote by $\yon$ the polynomial with a single position and a single direction. For polynomials $p,q$, their \emph{composition} is the polynomial
\[
p \tri q \coloneq \sum_{\substack{I \in p(1) \\ J \colon p[I] \to q(1)}} \yon^{\sum\limits_{i \in p[I]} q[Ji]}.\qedhere 
\]
\end{definition}

There is a monoidal structure on the category $\poly$ given by $(\yon,\tri)$.

\section{Comonoids and bicomodules}

\begin{definition}
A \emph{comonoid} in $\poly$ is a polynomial $c$ equipped with morphisms $\epsilon \colon c \to \yon$ (the counit) and $\delta \colon c \to c \tri c$ (the comultiplication) satisfying unit and associativity equations. A comonoid homomorphism is a morphism of polynomials $c \to c'$ which commutes with counits and comultiplications.
\end{definition}

\begin{definition}
For comonoids $c,d$ in $\poly$, a $(c,d)$-bicomodule is a polynomial $p$, called the \emph{carrier}, equipped with morphisms
\[
c \tri p \From{\lambda} p \To{\rho} p \tri d
\]
which commute with each other as well as the counits and comultiplications of $c$ and $d$, in the sense of \cite[Definition 2.2.11]{spivak2021functorial}. We will often denote a $(c,d)$-bicomodule $p$ as $c \bifrom[p] d$.
\end{definition}

In \cite[Corollary 2.2.10]{spivak2021functorial}, the author established using a theorem of Shulman \cite[Theorem 11.5]{shulman2008framed} that there is a double category $\comod(\poly)$ whose objects are comonoids and horizontal morphisms are bicomodules.

\begin{definition}\label{defcatsharp}
$\ccatsharp$ is the pseudo-double category $\comod(\poly)$ which has
\begin{itemize}
	\item as objects, the comonoids in $\poly$;
	\item as vertical morphisms, the comonoid homomorphisms;
	\item as horizontal morphisms from $c$ to $d$, the $(c,d)$-bicomodules;
	\item as squares between homomorphisms $\phi,\psi$ and bicomodules $p,p'$, the morphisms of polynomials $\gamma \colon p \to p'$ such that the diagram in \eqref{eqn.square} commutes;
	\begin{equation}\label{eqn.square}
	\begin{tikzcd}
	c \tri p \dar[swap]{\phi \tri \gamma} & p \lar \rar \dar{\gamma} & p \tri d \dar{\gamma \tri \psi} \\
	c' \tri p' & p' \lar \rar & p' \tri d'
	\end{tikzcd}
	\end{equation}
	\item as horizontal identities, the comultiplication bicomodules $c \tri c \From{\delta} c \To{\delta} c \tri c$; and
	\item as composition of horizontal morphisms $c \bifrom[p] d \bifrom[q] e$, the bicomodule $p \tri_d q$ on the top row of \eqref{eqn.biccomp},
	\begin{equation}\label{eqn.biccomp}
	\begin{tikzcd}
	c \tri (p \tri_d q) \dar & p \tri_d q \dar \rar \lar & (p \tri_d q) \tri e \dar \\
	c \tri p \tri q \dar[shift right=2] \dar[shift left=2] & p \tri q \dar[shift right=2] \dar[shift left=2] \rar \lar & p \tri q \tri e \dar[shift right=2] \dar[shift left=2] \\
	c \tri p \tri d \tri q & p \tri d \tri q \rar \lar & p \tri d \tri q \tri e
	\end{tikzcd}
	\end{equation}
	where each object in the top row of \eqref{eqn.biccomp} is computed as the equalizer of the column below it, using the fact that the functors $c \tri -$ and $- \tri e$ preserve connected limits, and the maps between them are induced by the underlying transformations between equalizer diagrams. This also shows how to horizontally compose squares, as a pair of adjacent squares provides the data of a transformation of equalizer diagrams which induces a map between the composite bicomodules.\qedhere
\end{itemize}
\end{definition}

\section{Categories, cofunctors, and familial functors}\label{sec.prafunctors}

The motivation for usng $\ccatsharp$ in this setting comes from recent results of Ahman--Uustalu \cite{ahman2014when} and Garner%
\footnote{The proof was originally sketched in \href{https://www.youtube.com/watch?v=tW6HYnqn6eI}{Garner's HoTTEST video}; see also \cite[Section 2.3]{spivak2021functorial}.}
 that, respectively, comonoids in $\poly$ can be interpreted as categories and that bicomodules between them correspond to familial functors between their associated copresheaf categories.

\begin{definition}\label{comonoidcategory}
For a polynomial comonoid $(c,\epsilon,\delta)$, its corresponding (small) category has
\begin{itemize}
	\item as objects, elements of the set $c(1)$;
	\item as morphisms out of an object $C \in c(1)$, the set $c[C]$;
	\item as codomain assignment for morphisms out of $C$, the function $\delta_1(c) \colon c[C] \to c(1)$;
	\item as composition of morphisms out of $C$, the function $\delta^\sharp_C \colon c[C] \times_{c(1)} c_\ast(1) \to c[C]$; and
	\item as the identity morphism at $C$, the function $\epsilon^\#_C \colon 1 \to c[C]$.\qedhere
\end{itemize}
\end{definition}

Conversely, given a small category $\cat{C}$, for any object $C\in\ob(\cat{C})$, let $\cat{C}[C]\coloneqq\sum_{C'\in\ob(\cat{C})}\cat{C}(C,C')$ denote the set of all morphisms with source $C$. The polynomial comonad corresponding to $\cat{C}$ is carried by the polynomial $c\coloneqq\sum_{C\in\ob(\cat{C})}\yon^{\cat{C}[C]}$. The counit map $\epsilon\colon c\to\yon$ consists of a choice of morphism out of each object, which we take to be the identity. The comultiplication map $\delta\colon c\to c\tri c$ consists of the data of codomains and composition (see the proof of \cite[Theorem 5.6]{shapiro2023structures}).

Comonoids in $\poly$ therefore correspond to small categories, though for convenience we will often refer to them as simply categories. However, homomorphisms of comonoids do not correspond to functors between categories.

\begin{definition}
For categories $c$ and $d$, a cofunctor $c \to d$ consists of a function from the objects of $c$ to those of $d$, along with a lift of every morphism out of an object $D$ in $d$ to a morphism out of every object in $c$ sent to $D$. These liftings must respect codomains, identities, and composites in $c$ and $d$.
\end{definition}

When $c$ and $d$ are regarded as polynomial comonoids, a cofunctor consists of a function $\phi_1 \colon c(1) \to d(1)$ along with, for each $C \in c(1)$, a function $c[C] \from d[\phi_1C]$; precisely a morphism of polynomials $\phi \colon c \to d$. Preservation of identities, codomains, and composites from the definition of a cofunctor corresponds to the morphism of polynomials preserving counit and comultiplication (see \cite[Corollary 5.12]{shapiro2023structures}).

\begin{definition}\label{defprafunctor}
A familially representable functor $F \colon d\set \to c\set$ (henceforth called simply a familial functor) is a functor of the form, for any $d$-copresheaf $X$ and object $C \in c(1)$,
\[
F(X)_C \cong \sum_{I \in p_C(1)} \Hom_{d\set}(p[I],X)
\]
where $p_{(-)}(1)$ is a functor $c \to \smset$ (which we will denote by simply $p(1)$), $p_C(1)$ is its evaluation at $C$, and $p[-]$ is a functor $\left(\int p(1)\right)\op \to d\set$ from the dual of the category of elements of $p(1)$.

When $p$ is a $(c,d)$-bicomodule and $C \in c(1)$, we have $p(1)\cong\sum_{C\in c(1)}p_C(1)$, and we recover $p_C(1)$ as the preimage of $C$ under the function $p(1) \To{\lambda(1)} (c \tri p)(1) \To{c\tri\,!} c(1)$. Moreover, for $I \in p_C(1)$ and $D \in d(1)$, the set $p[I]_D$ is the preimage of $D$ under the function $p[I] \to d(1)$ given by the element $1\To{I}p(1) \To{\rho(1)} (p \tri d)(1)$.
\end{definition}

Based on this interpretation, we will denote a $(c,d)$-bicomodule $p$ as
\[
\sum_{C \in c(1)} \sum_{I \in p_C(1)} \yon^{p[I]}
\]
where $p[I]$ has the structure of a $d$-copresheaf. The elements $I \in p_C(1)$ will be called \emph{operations} and the copresheaves $p[I]$ will be called \emph{arities}.


\begin{example}[Copresheaves as bicomodules]
For $c$ any category, a $(c,0)$-bicomodule $p$ is a familial functor from $0\set$, the terminal category, to $c\set$. The particular copresheaf on $c$ this functor picks out is $p(1)$, whose elements are positions of $p$ and whose $c$-copresheaf structure is determined by the map $c \tri p \From{\lambda} p$. As there is also a map $p \to p \tri 0$ which preserves the positions of $p$, and forces the polynomial $p$ to have an empty set of directions. The category of $(c,0)$-bicomodules and maps between them as in \eqref{eqn.square} with $\phi,\psi$ identities is equivalent to the category $c\set$.

The composition of a $(c,d)$-bicomodule and a $(d,0)$-bicomodule is precisely the $c$-copresheaf given by applying the familial functor $d\set \to c\set$ to a $d$-copresheaf.
\end{example}


\begin{example}[Identity bicomodules]
The identity bicomodule $c \bifrom[c] c$ has the form $\sum\limits_{C \in c(1)} \yon^{c[C]}$, so it has a single operation for each object of $c$ with arity the corepresentable copresheaf $c[C]$.
\end{example}

\begin{example}[Composite bicomodules]
For a $(c,d)$-bicomodule $p$ and a $(d,e)$-bicomodule $q$ of the forms
\[
p = \sum_{C \in c(1)}\sum_{I \in p_C(1)} \yon^{p[I]} \qqand q = \sum_{D \in d(1)}\sum_{J \in q_D(1)} \yon^{q[J]},
\]
their composite $(c,e)$-bicomodule has the form
\[
p \tri_d q = \sum_{C \in c(1)}\sum_{\substack{I \in p_C(1) \\ J \colon p[I] \to q(1)}} \yon^{\colim_{i \in p[I]} q[Ji]},
\]
where the colimit in the directions is indexed by the category of elements of the $d$-copresheaf $p[I]$ (see \cite[Propositions 3.11,3.12]{garner2018shapely}, \cite[Proposition 1.8]{shapiro2022thesis}).
\end{example}

\section{Comonads and comodules}

Both the construction of the theory category of a monad and the corresponding nerve functor from algebras of the monad to models of the theory can be constructed using general categorical constructions in $\ccatsharp$, which we now describe.

Recall that $\ccatsharp$ is defined as $\comod(\poly)$, where each object is a comonoid in $\poly$ and each horizontal morphism is a bicomodule. A general fact about the $\comod$ construction shows that a horizontal comonad in $\comod(\poly)$ can in fact be equivalently treated as an ordinary comonoid in $\poly$ via its carrier polynomial.

\begin{lemma}[Dual to {\cite[Remark 5.15]{Cruttwell.Shulman:2010a}}]\label{comonadcofunctor}
For $c$ a polynomial comonoid, given a $(c,c)$-bicomodule $e$ with the structure of a $(c,\tri_c)$-comonoid, the carrier polynomial $e$ forms a polynomial comonoid equipped with a homomorphism to $c$ which acts on positions by sending the elements of $e_C(1)$ to $C \in c(1)$. 
\end{lemma}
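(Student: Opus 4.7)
The plan is to read off the polynomial comonoid structure on $e$ directly from its horizontal comonad data, and identify the horizontal counit $\varepsilon \colon e \to c$ as the desired comonoid homomorphism. A horizontal comonad on $c$ with carrier $e$ consists of a $(c,c)$-bicomodule $e$ together with bicomodule morphisms $\mu \colon e \to e \tri_c e$ and $\varepsilon \colon e \to c$ (where $c$ denotes the horizontal identity bicomodule) satisfying the usual comonad axioms in $\ccatsharp$. By \eqref{eqn.biccomp}, the horizontal composite $e \tri_c e$ is the equalizer of $\rho \tri e$ and $e \tri \lambda$ inside $e \tri e$, so it comes equipped with a canonical polynomial map $\iota \colon e \tri_c e \to e \tri e$.

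I would then define the polynomial comonoid structure on $e$ by
\[
\epsilon_e \coloneqq \bigl( e \To{\varepsilon} c \To{\epsilon_c} \yon \bigr),
\qquad
\delta_e \coloneqq \bigl( e \To{\mu} e \tri_c e \To{\iota} e \tri e \bigr).
\]
The polynomial counit and associativity axioms for $(\epsilon_e, \delta_e)$ follow from the corresponding comonad axioms for $(\varepsilon, \mu)$ in $\ccatsharp$, together with the fact that the left and right unitors $c \tri_c e \cong e$ and $e \tri_c c \cong e$ in $\ccatsharp$ are realized at the polynomial level by postcomposing the analogous equalizer inclusions with $\epsilon_c \tri e$ and $e \tri \epsilon_c$ respectively. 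For instance, the left polynomial counit law $(\epsilon_e \tri e) \circ \delta_e = \id$ reduces under these identifications to the left comonad counit law for $\mu$.

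The map $\varepsilon \colon e \to c$ is already a morphism of polynomials and preserves counits by the defining formula $\epsilon_e = \epsilon_c \circ \varepsilon$. Preservation of comultiplication reduces, after pushing $\delta_e$ through $\iota$, to the bicomodule morphism square $(c \tri \varepsilon) \circ \lambda = \delta_c \circ \varepsilon$ expressing that $\varepsilon$ is a bicomodule morphism. For the position claim, recall from \cref{defprafunctor} that $e_C(1)$ is the preimage of $C$ under $e(1) \To{\lambda(1)} (c \tri e)(1) \to c(1)$, with the second map being first-component projection. Evaluating the above bicomodule morphism square at positions and composing with the projection $(c \tri c)(1) \to c(1)$, which splits $\delta_c$ on positions by the counit axiom for $c$, identifies $\varepsilon_1$ with precisely the map sending each $I \in e_C(1)$ to $C \in c(1)$.

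The main bookkeeping hurdle will be aligning the pseudo-double-categorical unitors at $e$ with the relevant equalizer inclusions; a more abstract route is simply to dualize \cite[Remark 5.15]{Cruttwell.Shulman:2010a} applied to $\comod(\poly)$, whose argument only uses the equalizer construction of horizontal composition and the monoidal structure of $\poly$.
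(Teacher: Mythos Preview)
Your argument is correct. The paper itself gives no proof of this lemma beyond the attribution ``Dual to \cite[Remark 5.15]{Cruttwell.Shulman:2010a}'', so there is nothing to compare against except that citation, which you already flag as the abstract alternative. Your concrete unpacking---defining $\epsilon_e = \epsilon_c \circ \varepsilon$ and $\delta_e = \iota \circ \mu$ via the equalizer inclusion $\iota \colon e \tri_c e \hookrightarrow e \tri e$, and taking the horizontal counit $\varepsilon$ as the comonoid homomorphism---is exactly what dualizing Cruttwell--Shulman's remark in $\comod(\poly)$ amounts to, and the position computation via the left-coaction square $(c \tri \varepsilon)\circ\lambda = \delta_c\circ\varepsilon$ is the right way to extract the stated description of $\varepsilon_1$.
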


In fact, this correspondence between comonads on $c$ in $\ccatsharp$ and vertical morphisms into $c$ extends to relate comodules of a comonad to bicomodules into the corresponding comonoid.

\begin{lemma}\label{lenscoalgebras}
For a comonad $e$ on an object $c$ of $\ccatsharp$ and $d$ another object of $\ccatsharp$, there is a carrier-preserving equivalence between $(e,d)$-bicomodules and $(c,d)$-bicomodules with the structure of a left $e$-comodule.
\end{lemma}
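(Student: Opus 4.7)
The plan is to use \cref{comonadcofunctor}: a comonad $e$ on $c$ in $\ccatsharp$ is the same data as a polynomial comonoid $e$ equipped with a cofunctor $\pi \colon e \to c$, whose comonoid comultiplication $\delta \colon e \to e \tri e$ factors through the equalizer $e \tri_c e \hookrightarrow e \tri e$ as the comonad multiplication, whose comonoid counit $e \to \yon$ equals $\pi$ composed with the counit of $c$, and whose bicomodule coactions as a $(c,c)$-bicomodule are $(\pi \tri e) \circ \delta$ on the left and $(e \tri \pi) \circ \delta$ on the right. Unpacking the two sides of the claim, an $(e,d)$-bicomodule is a polynomial $p$ with a left coaction $\lambda \colon p \to e \tri p$ and a right coaction $\rho \colon p \to p \tri d$ satisfying the bicomodule axioms with respect to the comonoid $e$, while a $(c,d)$-bicomodule with left $e$-comodule structure consists of $p$ with maps $\lambda_c \colon p \to c \tri p$, $\rho$, and a square $\alpha \colon p \to e \tri_c p$ in $\ccatsharp$ giving a coassociative and counital coaction by the comonad $e$. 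Since $\rho$ is common to both sides, the comparison reduces to the left coactions.

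In the forward direction, given $(p, \lambda, \rho)$, I set $\lambda_c \coloneqq (\pi \tri p) \circ \lambda$, which inherits the $(c,d)$-bicomodule axioms from those of $\lambda$ because $\pi$ is a comonoid homomorphism. To extract $\alpha$, I show that $\lambda$ factors uniquely through the equalizer $e \tri_c p \hookrightarrow e \tri p$. Using coassociativity of $\lambda$ against $\delta$, both of the parallel maps $e \tri p \rightrightarrows e \tri c \tri p$ defining this equalizer become equal after precomposition with $\lambda$, namely to $(e \tri \pi \tri p) \circ (\delta \tri p) \circ \lambda$, which yields the desired factorization $\alpha$. Conversely, given $(p, \lambda_c, \rho, \alpha)$, I define $\lambda$ by postcomposing $\alpha$ with the equalizer inclusion $e \tri_c p \hookrightarrow e \tri p$. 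These two constructions are mutually inverse because $\alpha$ is the unique factorization of $\lambda$ through a monomorphism, and both manifestly preserve the carrier polynomial $p$.

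The main work, and the main obstacle, lies in the axiom bookkeeping: showing that the $(e,d)$-bicomodule axioms for $\lambda$ (coassociativity against $\delta$, counitality against the comonoid counit of $e$, and compatibility with $\rho$) decompose bijectively into the $(c,d)$-bicomodule axioms for $\lambda_c$ together with the comodule axioms for $\alpha$ over the comonad $e$ (coassociativity against the comonad multiplication, counitality against $\pi$, and the square-in-$\ccatsharp$ conditions with respect to $\lambda_c$ and $\rho$). The two ingredients that make this decomposition go through are precisely the factorization of $\delta$ through $e \tri_c e$ as the comonad multiplication and the relation between the comonoid counit of $e$ and $\pi$ recorded in \cref{comonadcofunctor}. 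Beyond that, verifying the correspondence is a diagram chase through the equalizer \eqref{eqn.biccomp} that computes $e \tri_c p$; no new conceptual input is required.
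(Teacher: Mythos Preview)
Your argument is correct and essentially complete: both directions of the correspondence, the carrier-preservation, and the key factorization step are right, and your computation that the two legs of the equalizer diagram $e \tri p \rightrightarrows e \tri c \tri p$ coincide after precomposing with $\lambda$ (via coassociativity of $\lambda$ against $\delta$) is exactly what is needed.

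Your route differs from the paper's in how you obtain the $(c,d)$-bicomodule map $p \to e \tri_c p$ from an $(e,d)$-bicomodule. You argue directly with the equalizer defining $e \tri_c p$: having set $\lambda_c \coloneqq (\pi \tri p)\circ\lambda$, you verify by hand that $\lambda$ equalizes the two maps $e \tri p \rightrightarrows e \tri c \tri p$ and hence factors through $e \tri_c p$. The paper instead invokes the equipment structure of $\ccatsharp$: it observes that the identity-on-carrier squares over the cofunctor $e \to c$ are cocartesian fillers (companions, in the sense of Shulman's framed bicategories), and the universal property of the composite cocartesian square produces the desired map $p \to e \tri_c p$ automatically. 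Your approach is more elementary and self-contained, requiring only the raw equalizer description of $\tri_c$ from \eqref{eqn.biccomp} and the dictionary of \cref{comonadcofunctor}; the paper's approach is more conceptual and situates the result within the general theory of equipments, which is the framework that the rest of \cref{sec.nervesincatsharp} implicitly relies on. Either way the remaining axiom-checking is, as you say, a straightforward chase.
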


In particular, when $d=0$ this means that if $X$ is a copresheaf on $c$ equipped with a coalgebra structure $X \to e(X)$ then there is a corresponding copresheaf on $e$ with the same set of elements, with each element of $X_C$ assigned an object in $e(1)$ by the function $X \to e \tri_c X \to e \tri_c 1 = e(1)$ which is sent to $C$ by the function $e(1) \to c(1)$.

\begin{proof}
Given a $(c,d)$-bicomodule $p$, the structure of a left $e$-comodule amounts to a morphism of polynomials $p \to e \tri_c p$, which by the definition of bicomodule composition induces a morphism $p \to e \tri p$, where the left $e$-comodule equations in $\ccatsharp$ imply the left $e$-comodule equations in $\poly$. Similarly, as $p \to e \tri_c p$ is a morphism of $(c,d)$-bicomodules, this $e$-coaction on $p$ in $\poly$ is compatible with the right $d$-coaction on $p$, so that $p$ has the structure of a $(e,d)$-bicomodule.

By \cref{comonadcofunctor}, $e$ has a homomorphism to $c$, and as such there is a square in $\ccatsharp$ as on the left in \eqref{eqn.cocartesian} from $e$ as the identity $(e,e)$-bicomodule to $e$ as a $(c,c)$-bicomodule given by the identity morphism on $e$. The $(e,d)$-bicomodule structure on $p$ induces a $(c,d)$-bicomodule structure where the left coaction is given by the composite $c \tri p \from e \tri p \from p$ and the $(c,d)$-bicomodule equations are guaranteed from the $(e,d)$-bicomodule equations as the map $c \from e$ is a comonoid homomorphism. The identity morphism on $p$ therefore provides a morphism of bicomodules as on the left in \eqref{eqn.cocartesian}. 
\begin{equation}\label{eqn.cocartesian}
\begin{tikzcd}
e \rar[bimr-biml,""{name=S, below}]{e} \dar & e \rar[bimr-biml,""{name=U, below}]{p} \dar & d \dar[equals] \\
c \rar[bimr-biml,""{name=T, above},swap]{e} & c \rar[bimr-biml,""{name=V, above},swap]{p} & d
\arrow[Rightarrow,shorten=4,from=S,to=T]
\arrow[Rightarrow,shorten=4,from=U,to=V]
\end{tikzcd}
\end{equation}
Note that both of these squares are cocartesian fillers (in the sense of equipments, see \cite[Lemma 2.3.13]{spivak2021functorial} and \cite{shulman2008framed}) of their respective vertical arrows and source horizontal arrows. This means that the cocartesian filler of the outer frame of the composite agrees with the square on the right, so by the universal property of cocartesian fillers the composite square induces a morphism of $(c,d)$-bicomodules from $p$ to $e \tri_c p$, providing the coaction of a left $e$-comodule structure on $e$ in $\ccatsharp$. The comodule equations are then straightforward to deduce from the left $e$-comodule structure of $p$ in $\poly$ and cofunctoriality of $e$ over $c$.
\end{proof}

\section{Right coclosure (left Kan extension)}

We now recall the main structure in $\ccatsharp$ which will allow us to model theories and nerves, the \emph{right coclosure} (also known as \emph{left Kan extension}).

\begin{definition}[{\cite[Proposition 2.4.6]{spivak2021functorial}}]\label{coclosure}
For a $(d,e)$-bicomodule $q$, the functor $- \tri_d q$ from $(c,d)$-bicomodules to $(c,e)$-bicomodules has a left adjoint $\lens{-}{q}$. For a $(c,e)$-bicomodule $p$ its carrier is defined to be
\begin{equation}\label{eqn.coclosure}
\lens{p}{q} \coloneq \sum_{C \in c(1)}\sum_{I \in p_C(1)} \yon^{q \tri_e p[I]},
\end{equation}
where $p[I]$ is regarded as an $(e,0)$-bicomodule.
\end{definition}

We note the unit and counit of this adjunction for convenience:
\begin{equation}\label{eqn.coclosure_unit_counit}
  p\to\lens{p}{q}\tri q
  \qqand
	\lens{r \tri q}{q}\to r
\end{equation}
The former illustrates how the right coclosure from \eqref{eqn.coclosure} corresponds to the left Kan extension, equivalently in $\ccatsharp$ and the bicategory of copresheaf categories and familial functors.
\[
\begin{tikzcd}
	c\ar[r,bimr-biml, "p", ""' name=p]&
	e\ar[d,biml-bimr, "q"]\\&
	d\ar[ul, bend left, biml-bimr, pos=.8, "\lens{p}{q}", ""' name=cocl]
	\ar[from=p, to=p|-cocl.south, shorten >=-8pt, Rightarrow]
\end{tikzcd}
\hspace{1in}
\begin{tikzcd}
	e\set\ar[d, "q\tri_d-"']\ar[r, "p\tri_e-", ""' name=p]&
	c\set\\
	d\set\ar[ur, bend right, "\tn{Lan}_pq"', "" name=cocl]
	\ar[from=p, to=p|-cocl.south, shorten >=0pt, Rightarrow]
\end{tikzcd}
\]

We will be particularly interested in the case of right coclosures with the structure of a comonad.

\begin{lemma}\label{thetacomonoid}
For $m$ a monad on $c$ in $\ccatsharp$ and $p$ any $(d,c)$-bicomodule, the left Kan extension $\lens{p}{p \tri_c m}$ is a comonad on $d$.
\end{lemma}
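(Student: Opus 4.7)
The plan is to construct the counit and comultiplication of $e \coloneq \lens{p}{p \tri_c m}$ using the coclosure adjunction $\lens{-}{p \tri_c m} \dashv -\tri_d (p \tri_c m)$ from \cref{coclosure}, and then verify the comonad axioms from the monad axioms of $m$. The key tool is the universal property of the coclosure: morphisms $e \to r$ out of $e$ are in natural bijection with their adjuncts $p \to r \tri_d (p \tri_c m)$, so every comonad axiom will be reduced to a statement about morphisms out of $p$.

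First I would define the counit $\epsilon \colon e \to d$ to be the adjunct of
\[
  p \cong p \tri_c c \xrightarrow{p \tri_c \eta} p \tri_c m \cong d \tri_d (p \tri_c m),
\]
where $\eta \colon c \to m$ is the unit of the monad $m$. Next, for the comultiplication $\delta \colon e \to e \tri_d e$, I would take the adjunct of the composite
\[
  p \xrightarrow{u} e \tri_d p \tri_c m \xrightarrow{e \tri_d u \tri_c m} e \tri_d e \tri_d p \tri_c m \tri_c m \xrightarrow{e \tri_d e \tri_d p \tri_c \mu} e \tri_d e \tri_d p \tri_c m,
\]
which applies the coclosure unit $u \colon p \to e \tri_d p \tri_c m$ from \eqref{eqn.coclosure_unit_counit} twice and the multiplication $\mu \colon m \tri_c m \to m$ of $m$ once.

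The comonad axioms would then follow by reducing each axiom---an equality of morphisms with domain $e$ or $e \tri_d e$---via the coclosure adjunction (and naturality of $u$) to an equality of morphisms $p \to e^{\tri_d k} \tri_d (p \tri_c m)$, which further reduces to an equality of composites built from $\eta$, $\mu$, and identities on $p$ inside various $p \tri_c m \tri_c \dots \tri_c m$. Under this reduction, the two counit axioms become exactly the two unit axioms of $m$, and coassociativity becomes the associativity of $\mu$.

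I expect the main obstacle will be the bookkeeping in the coassociativity check: expanding $\delta$ twice in the two orders prescribed by the coassociativity square produces adjunct composites that look syntactically different, and both must be recognized as the same map involving $\mu$ applied to three copies of $m$ sitting to the right of $p$. Once both adjuncts are placed in a common form using naturality of $u$, the verification reduces cleanly to the associativity of $\mu$ in $p \tri_c m \tri_c m \tri_c m$.
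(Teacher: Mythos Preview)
Your proposal is correct and matches the paper's proof essentially verbatim: the paper defines the counit and comultiplication of $\lens{p}{p\tri_c m}$ as the adjuncts of exactly the maps you write (using $\eta$, two applications of the coclosure unit, and $\mu$), and likewise leaves the counitality and coassociativity checks to follow directly from the unit and associativity laws of $m$. Your sketch of the bookkeeping for coassociativity is in fact more detailed than what the paper records.
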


\begin{proof}
In the interest of space we construct only the counit and comultiplication transformations, but as they are derived directly from the unit and multiplication transformations of the monad $m$ it is straightforward to check that their counitality and coassociativity follow directly from unitality and associativity for $m$, respectively.

The strategy here is to use the universal property of the coclosure that morphisms of $(d,d)$-bicomodules from $\lens{p}{p \tri_c m}$ to $q$ are in bijective correspondence with morphisms from $p$ to $q \tri_d p \tri_c m$. The counit transformation $\lens{p}{p \tri_c m} \to d$ then corresponds to the transformation $p \to p \tri_c m \cong d \tri_d p \tri_c m$ given by the unit of $m$.

Similarly, the comultiplication $\lens{p}{p \tri_c m} \to \lens{p}{p \tri_c m} \tri_d \lens{p}{p \tri_c m}$ corresponds to the composite 
\[
p \to \lens{p}{p \tri_c m} \tri_d p \tri_c m \to \lens{p}{p \tri_c m} \tri_d \left(\lens{p}{p \tri_c m} \tri_d p \tri_c m\right) \tri_c m \to \lens{p}{p \tri_c m} \tri_d \lens{p}{p \tri_c m} \tri_d p \tri_c m
\]
consisting of two applications of the unit of the adjunction $\lens{-}{p \tri_c m} \dashv - \tri_d p \tri_c m$ followed by the multiplication of $m$.
\end{proof}

As a special case, when $m = c$ this shows that $\lens{p}{p}$ is a comonoid for any bicomodule $p$.

\chapter{Familial monads}

The main objects we consider are \emph{familial monads}, which can be regarded as encoding the data, structure, and properties of a particular type of higher (or lower) dimensional category.

\begin{definition}
A familial monad is a cartesian monad on a presheaf category whose underlying endofunctor is familial as in \cref{defprafunctor} (and hence arises from a formal monad in $\ccatsharp$).
\end{definition}

\begin{example}[Free category monad]\label{defpath}
We denote by $g$ the category $\edge \Tto{s}{t} \vertex$ whose copresheaves are precisely graphs, and by $\vec n$ the graphs with vertices $0,...,n$ and edges $i \minus 1 \to i$ for all $1 \le i \le n$. The free category monad on graphs classically sends a graph to the graph with the same vertices and edges given by finite directed paths in the original graph, which can be partitioned according to their length; in this way, we can familially represent this endofunctor $\path$ as (for a $g$-copresheaf $X$)
\[
\path(X)_\vertex \cong \Hom(\vec 0, X) \qquad\qquad \path(X)_\edge \cong \sum_{n \in \nn} \Hom(\vec n,X)
\]
as $\vec n$ is the ``walking length-$n$ path graph.'' The familial functor $\path$ then corresponds to the $(g,g)$-bicomodule 
\[
\path \coloneqq \{\vertex\}\yon + \{\edge\}\sum\limits_{n \in \nn} \yon^{\vec n},
\]
where the labels $\edge,\vertex$ indicate how the left coaction is defined on positions.
\end{example}

Following \cite[Section 5.2]{shapiro2022thesis} or unraveling the definition of a cartesian formal monad in $\ccatsharp$ under the correspondence in \cref{sec.prafunctors}, we get that a familial monad on the category $c\set$ consists of a $(c,c)$ bicomodule
\[
m = \sum_{C \in c(1)}\sum_{M \in m_C(1)} \yon^{m[M]}
\] 
(where $M \in m_C(1)$ is called an \emph{operation outputting a $C$-cell} and $m[M]$ is called the \emph{arity} of $M$) along with the following additional data subject to unit and associativity equations:
\begin{itemize}
	\item for each $C \in c(1)$, a \emph{unit} operation $\eta(C) \in m_C(1)$ and an isomorphism $m[\eta(C)] \cong c[C]$ between its arity and the representable copresheaf on $C$; and
	\item for each $M \in m_C(1)$ and $N \colon m[M] \to m(1)$ a morphism in $c\set$, a \emph{composite} operation $\mu(M,N) \in m_C(1)$ and an isomorphism $m[\mu(M,N)] \cong \colim\limits_{z \in m[M]} m[Nz]$ between its arity and the colimit of arities indexed by the category of elements of $m[M]$.
\end{itemize}

\begin{example}[Free category monad]
For $\path$, whose algebras are categories, the operations outputting edges correspond to the $n$-ary composition of arrows for each $n \in \nn$ and their arities are the graphs $\vec n$ with $n$ adjacent edges. The unit operation outputting an edge is the trivial 1-ary composition of a single edge, while the only operation outputting a vertex is the unit. The composite operations arise from ``plugging in'' $n$ different paths of possibly different lengths $m_1,...,m_n$ into the length-$n$ path to get a path of length $m_1+\cdots+m_n$ and the unique operation with that graph as its arity. This uniqueness of operations with respect to their arities is what ensures that the unit and associativity equations for categories hold among $\path$-algebras, and a similar property applies to other familial monads whose algebras are strict higher category structures.
\end{example}

\begin{example}[Symmetric operads]\label{operads}
Any symmetric operad which is $\Sigma$-free, meaning that the symmetric group actions are free on the sets of $N$-ary operations, has a corresponding familial monad on sets with the same algebras (see for instance \cite[Example C.2.5]{Leinster:2004a}). In polynomial notation, noting that the $(\yon,\yon)$-bicomodules corresponding to familial endofunctors on $\smset$ are precisely ordinary polynomials, a symmetric operad $\O$ corresponds to the endofunctor
\[
o \coloneqq \sum_{N \in \nn} \O_N \times \yon^{\ul N} = \sum_{(N,[O]) \in \sum_N \O_N/\Sigma_N} \yon^{\ul N}.
\]
The operations are given by the equivalence classes $[O]$ in the quotient set $\O_N/\Sigma_N$ and have arity $o[[O]] = \ul N = \{1,...,N\}$, and for each equivalence class $[O]$ we pick a representative $\bar O \in [O]$. The unique unit operation is given by the unit of $\O$. As $\colim\limits_{v \in o[[O]]} o[[Pv]] = \sum_v \ul{M_v} \cong \ul{M_1+\cdots+M_N}$ for $[O] \in \O_N/\Sigma_N$ and $[Pv] \in \O_{M_v}/\Sigma_{M_v}$, we can let $\mu([O],[P]) = [\bar O \circ \bar P]$ with the isomorphism 
\begin{equation}\label{eqn.operad}
\colim\limits_{v \in o[[O]]} o[[Pv]] = o[[P_1]] \coprod \cdots \coprod o[[P_N]] \cong o[[\bar O \circ \bar P]] \cong o[[\overline{O \circ P}]],
\end{equation}
where $\bar O \circ \bar P$ and $\overline{O \circ P}$ are by the equivariance law for symmetric operads related by a permutation in $\Sigma_{M_1+\cdots+M_N}$, and by $\Sigma$-freedom of $\O$ this permutation is unique and forms the rightmost isomorphism in \eqref{eqn.operad}. 
\end{example}

\section{Higher categories}

Many examples of familial monads whose algebras are higher categories are discussed in \cite[Sections 1.3 and 7]{shapiro2022thesis}; here we summarize some of those examples and provide a new one whose algebras are symmetric monoidal categories.

\begin{example}[Higher categories]
Various notions of algebraic higher categories are given by algebras for particular familial monads, including the following:
\begin{itemize}
	\item the free $n$-category monad on $n$-graphs (also known as $n$-globular sets), where there is a unique operation outputting a $k$-cell ($k \le n$) for each $k$-dimensional globular pasting diagram, which forms the arity of that operation \cite[Proposition F.2.3]{Leinster:2004a}; 
	\item the free double category monad on double graphs (which have two types of edges along with squares between them), where the operations outputting edges of each type and their arities resemble those for categories and there is a unique operation outputting a square for each pair of natural numbers $(n,m)$, with arity the double graph given by an $n \times m$ grid of squares \cite[Example 1.14]{shapiro2022thesis}; and
	\item the free (symmetric or nonsymmetric) multicategory monad on multigraphs,\footnote{We use the category theorists' convention that graphs are by default directed with loops and multiple edges, so that the term ``multigraph'' may refer to the data underlying a multicategory.} with an operation outputting an $n$-to-1 edge for each finite tree with $n$ leaves (and in the symmetric case a separate operation for each ordering of those $n$ leaves) \cite[Example 2.14]{weber2007familial}. 
\end{itemize}
\end{example}

\begin{example}[Symmetric monoidal categories]\label{smc}
We now define a familial monad $\smc$ on the category of graphs, for which the algebras are symmetric strict monoidal categories, whose underlying monoidal categories are strict but only symmetric up to coherent isomorphism. While we could just as well discuss general symmetric monoidal categories, and will indicate the necessary modifications to do so, symmetric strict monoidal categories will be a convenient example for algebraic familial monads in \cref{sec.algebraic}.\footnote{This example was previously explained by the second named author at \href{https://youtu.be/u8XCiI-ZSHc?t=10}{https://youtu.be/u8XCiI-ZSHc?t=10}.}

Let $g$ be the polynomial comonad that indexes graphs, as in \cref{defpath}. As a $(g,g)$-bicomodule, we define $\smc$ as
\[
\smc \coloneqq \{\vertex\}\sum_{N \in \nn} \yon^{\ul N} + \{\edge\}\!\!\!\sum_{\substack{N \in \nn,\; \sigma \in \Sigma_N \\ M_1,...,M_N \in \nn}} \yon^{\vec M_1 + \cdots + \vec M_N}
\]
where $\ul N$ denotes the graph with vertices labeled $1,...,N$ and no edges, and $\vec M_1 + \cdots + \vec M_N$ is the graph consisting of disjoint paths of lengths $M_1,...,M_N$ respectively. 
The graph $\smc(1)$ has $\nn$ as its set of vertices and $\sum\limits_{N \in \nn} \nn^N \times \Sigma_N$ as its set of edges, with both the source and target of $(N,\sigma,M_1,...,M_N)$ being $N$. We then need to specify two graph homomorphisms $\ul N \to \vec M_1 + \cdots + \vec M_N$ (which we will call source and target); the source inclusion maps the $N$ vertices to the source vertex of each of the $N$ paths, in order, while the target inclusion maps them to the target vertex of each of the paths with their order permuted by $\sigma$. 

The unit operations are then $1$ and $(1,\id,1)$ for vertices and edges respectively. A graph homomorphism $I \colon \ul N \to \smc(1)$ consists of $N$ natural numbers $I(1),...,I(N)$, and the composite $\mu(N,I) = \sum_i I(i) \in \smc_v(1)$ (this is the same composition as in the familial monad whose algebras are monoids). For a graph homomorphism $I \colon \vec M_1 + \cdots + \vec M_N \to \smc(1)$, note that as $\vec M_i$ is connected and $\smc(1)$ is a disjoint union of graphs with a single vertex and many loops, $I$ sends all of the vertices in $\vec M_i$ to a single vertex in $\smc(1)$, which we denote $I(i) \in \nn$. We further denote by $I(i,j)$ the image in $\smc_e(1)$ of the $j$th edge of $\vec M_i$, which consists of the natural number $I(i)$, a permutation $I(i,j)_\Sigma$, and $I(i)$-many natural numbers $I(i,j)_1,...,I(i,j)_{I(i)}$. The composite operation $\mu((N,\sigma,M_1,...,M_N),I)$ is then given by 
\[
\left(\sum_{i=1}^N I(i), \sum_{i=1}^N I(\sigma(i),1)_\Sigma \circ \cdots \circ I(\sigma(i),M_{\sigma(i)})_\Sigma, \right.
\]
\[
\left.\sum_{j = 1}^{M_1} I(1,j)_1,...,\sum_{j = 1}^{M_1} I(1,j)_{I(1)},...,\sum_{j = 1}^{M_N} I(N,j)_1,...,\sum_{j = 1}^{M_N} I(N,j)_{I(N)}\right),
\]
where a sum of permutations denotes the appropriately partitioned permutation on the sum of the underlying sets. Informally, the arity of the composite is formed by substituting the arity $\smc[I(i,j)]$ in for the $j$th edge of $\vec M_i$, and these arities will be compatible along vertices as the discrete graph $\smc[I(i)]$ is substituted in for each vertex in $\vec M_i$. The unit and associativity equations are then tedious but straightforward to check.

To see that algebras for $\smc$ are symmetric strict monoidal categories, observe that the strictly associative $n$-ary tensor product is defined on objects by the operation $N \in \smc_v(1)$ (0 being the unit) and on morphisms by the operation $(N,\id,1,...,1) \in \smc_e(1)$; in an algebra for $\smc$, these operations ensure that to any $N$ vertices there is an associated vertex corresponding to their tensor product, and likewise for $N$ morphisms. Composition and identities are defined by the operations $(1,\id,N) \in \smc_e(1)$ with the same arities as those for the free category monad $\path$, and the uniqueness of operations with the arities of $N$ morphisms in either series or parallel ensures the unitality and associativity of both composition and tensor product. Functoriality of the tensor product is witnessed by the equations
$$\mu\bigg((2,\id,1,1),I(1,1) = I(2,1) = (1,\id,2)\bigg) = (2,\id,2,2)$$
\vspace{-.5cm}$$ = \mu\bigg((1,\id,2),I(1,1) = I(1,2) = (2,\id,1,1)\bigg)\qand$$
\vspace{-.5cm}$$\mu\bigg((2,\id,1,1),I(1,1) = I(2,1) = (1,\id,0)\bigg) = (2,\id,0,0) = \mu\bigg((1,\id,0),I(1) = 2\bigg)$$
in $\smc_e(1)$ which encode that in an algebra the composition (resp. identity) of tensor products is the tensor product of compositions (resp. identities). Finally, the symmetry isomorphisms are of the form $(N,\sigma,0,...,0)$ and the source/target inclusions into their arities encode that they map from each $N$-ary tensor product to the tensor product of the same vertices in the appropriately permuted order. Naturality of the symmetry isomorphisms is encoded by the equation
$$\mu\bigg((1,\id,2),I(1,1) = (2,\sigma,0,0),I(2,1) = (2,\id,1,1)\bigg) = (n,\sigma,1,1)$$ 
\vspace{-.5cm}$$ = \mu\bigg((1,\id,2),I(1,1) = (2,\id,1,1),I(2,1) = (2,\sigma,0,0)\bigg)$$
in $\smc_e(1)$, while associativity and invertibility of symmetries are encoded by the fact that operations in $\smc_e(1)$ with arity $\ul N$ are uniquely determined by a single permutation in $\Sigma_N$.

This method of adding in higher dimensional invertible operations to enforce commutativity properties not up to equality but rather up to higher dimensional cells is a way to get around the $\Sigma$-free condition from \cref{operads} (and is explored in \cite{weber2015operads}). It can also be used in a similar manner to define a monad whose algebras are weakly unital and/or associative, such as general symmetric monoidal categories: to do so, one would modify the elements of $\smc_v(1)$ to consist not just of a natural number $N$ but also a parenthesization of $N$ elements (parenthesizations could be binary/nullary or arbitrary length, corresponding to biased or unbiased monoidal categories as in \cite[Sections 3.1,3.2]{Leinster:2004a}). Each operation in $\smc_e(1)$ as above would then also be equipped with two such parenthesizations of $N$ elements indicating its source and target operations, but these would not otherwise affect the data of $\smc$ which encodes the coherence property that any two parenthesizations of the same number of objects have a unique coherence isomorphism between their correpsonding tensor products. Further variations on the notion of monoidal category such as braided and/or lax monoidal categories can also be modeled in a fairly similar manner.
\end{example}

%
%

\section{Theory categories and Nerves}\label{sec.theories}

The fully faithful \emph{nerve} functor from categories to simplicial sets encodes the algebraic structure of a category in a combinatorial manner: whereas a graph is a rather simple copresheaf which obtains the additional structure of a category only as an algebra for the $\path$ monad, a simplicial set is a more complex copresheaf but can encode the structure of a category all on its own. In \cite[Section 4]{weber2007familial}, Weber showed that this type of construction can be generalized to algebras for any familial monad: for each such $m$, there is a category $\Theta_m$ and a fully faithful functor $N_m \colon m\alg \to \Theta_m\op\set$.

\begin{definition}[{\cite[Definition 4.4]{weber2007familial}}]\label{theorycat}
For a familial monad $m$ on the copresheaf category $c\set$, define its \emph{theory category} $\Theta_m$ to be the full subcategory of $m\alg$ spanned by the free algebras $m(m[M])$ for $M \in m_C(1)$ with $C \in c(1)$. Equivalently, $\Theta_m$ is the full subcategory of the Kleisli category of $m$ spanned by the objects $m[M]$ in $c\set$. We will denote its objects by simply $M$.

Given an $m$-algebra $A$, its nerve is the functor $N_m A \colon \Theta_m\op \to \smset$ sending $M$ to the set $\Hom_{c\set}(m[M],A)$ (or equivalently $\Hom_{m\alg}(m(m[M]),A)$).
\end{definition}

There is a functor $c\op \to \Theta_m$ sending $C \in c(1)$ to $\eta(C) \in m_C(1)$; we will sometimes treat this faithful functor as a subcategory inclusion and denote the object $\eta(C)$ of $\Theta_m$ as simply $C$.

\begin{theorem}[{\cite[Theorem 4.10]{weber2007familial}}]
The functor $N_m$ is fully faithful and its essential image is precisely those copresheaves $X$ satisfying the ``Segal'' condition 
\[
X_M \cong \lim_{z \in m[M]_C} X_C.
\]
\end{theorem}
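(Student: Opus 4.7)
The plan is to recognize $N_m$ as the restricted Yoneda embedding along the inclusion $\Theta_m \hookrightarrow m\alg$, so that full faithfulness is equivalent to density of $\Theta_m$ in $m\alg$. I would establish this density by first noting that the arities $\{m[M]\}$ include every representable $c[C] = m[\eta(C)]$ (via the unit operations) and hence form a dense family in $c\set$, then using that $m$ is a left adjoint together with the canonical reflexive coequalizer presentation $m(m(A)) \rightrightarrows m(A) \to A$ of any algebra $A$ to conclude that every algebra is a canonical colimit of free algebras on arities, yielding density of $\Theta_m$ in $m\alg$.

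For the Segal condition, I would first check necessity. The arity $m[M]$, viewed as a $c$-copresheaf, is the colimit of representables over its category of elements: $m[M] \cong \colim_{z \in m[M]_C} c[C]$. Applying $\Hom_{c\set}(-,A)$ to this colimit decomposition yields
\[
(N_m A)_M = \Hom_{c\set}(m[M], A) \cong \lim_{z \in m[M]_C} \Hom_{c\set}(c[C], A) \cong \lim_{z \in m[M]_C} A_C,
\]
and $A_C = (N_m A)_{\eta(C)}$ under the inclusion $c\op \hookrightarrow \Theta_m$, so the Segal condition holds.

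For sufficiency, given $X \in \Theta_m\op\set$ satisfying Segal, I would construct a candidate algebra by restricting $X$ along the inclusion $c\op \hookrightarrow \Theta_m$ described before \cref{theorycat}, yielding a $c$-copresheaf $A$ with $A_C \coloneq X_{\eta(C)}$. To build the algebra structure $\alpha \colon m(A) \to A$, note that a section of $m(A)_C$ is a pair $(M, f \colon m[M] \to A)$, and the Segal condition identifies such $f$ with an element of $X_M$. The category $\Theta_m$ contains a canonical morphism $\eta(C) \to M$ classifying the identity on $m[M]$ at position $M$ of the free algebra $m(m[M])$, and applying $X$'s action on this morphism sends $f \in X_M$ to an element of $X_{\eta(C)} = A_C$, which I would take as $\alpha(M, f)$.

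The main obstacle is verifying that $\alpha$ satisfies the $m$-algebra axioms and that $N_m A \cong X$. Unitality is immediate because the canonical morphism associated to $\eta(C) \in m_C(1)$ is the identity of $\eta(C)$ in $\Theta_m$. Associativity requires matching the Kleisli composition in $\Theta_m$ of the canonical morphisms for $M$ and for each operation in an arity diagram $N \colon m[M] \to m(1)$ against the canonical morphism for the composite $\mu(M, N)$; this should follow from the isomorphism $m[\mu(M,N)] \cong \colim_{z \in m[M]} m[Nz]$ packaged into the definition of a familial monad and the naturality of Kleisli composition. Finally, $N_m A \cong X$ at each $M$ is immediate from Segal on both sides, and naturality across $\Theta_m$-morphisms follows from the fact that $\Theta_m$ is by construction the full Kleisli subcategory on the arities.
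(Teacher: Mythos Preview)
The paper does not give a proof of this statement: it is quoted as \cite[Theorem 4.10]{weber2007familial} with no argument supplied, and indeed a later remark in the paper explicitly says ``Weber's Nerve Theorem \ldots\ shows that the nerve functor is fully faithful, though we have not yet been able to express this proof in the language of $\ccatsharp$.'' So there is no in-paper proof to compare your proposal against.

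That said, since you have sketched an argument, a brief comment on it. Your treatment of the necessity of the Segal condition and the construction of the candidate algebra from a Segal presheaf are on the right track and match the standard approach. The density argument for full faithfulness, however, is where the real content lies and your sketch underestimates it: knowing that every algebra is an \emph{iterated} colimit of free algebras on arities (via the reflexive coequalizer plus density of arities in $c\set$) does not by itself give density of $\Theta_m$ in $m\alg$, since density requires the \emph{canonical} cocone to be colimiting. Weber's actual proof hinges on the fact that a familial monad is a ``monad with arities'': the free functor sends the canonical colimit decomposition of each arity into a colimit that is preserved by the nerve, and it is this compatibility---not merely that $m$ is a left adjoint---that makes the density argument go through. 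Similarly, checking that the isomorphism $N_m A \cong X$ is natural with respect to \emph{all} morphisms of $\Theta_m$ (not just the inert ones) is where the active/inert factorisation and the associativity of the Kleisli composition are genuinely used, and this is more than a routine verification.
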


We will sometimes call such copresheaves \emph{models} of the theory.

\begin{example}[Nerves of categories]
For the monad $\path$, the theory category $\Theta_\path$ has as objects the set $\{0\} + \nn$ of natural numbers with two distinct copies of 0 and as morphisms $k \to \ell$ Kleisli arrows $\vec k \to \path(\vec \ell)$, or equivalently functors $\path(\vec k) \to \path(\vec \ell)$, where the domain and codomain are the ordinal categories $[k]$ and $[\ell]$. Thus $\Theta_{\path}$ coincides with the simplex category $\Delta$, albeit with two isomorphic copies of the terminal category $[0]$,\footnote{This choice of category in the equivalence class of $\Delta$ admits a cofunctor from its opposite category to $g$, and is used in \cite[Section 6]{shapiro2022thesis} as a convenient convention for general theory categories.} 
corresponding to the nerve of a category being a simplicial set.
\end{example}

\begin{example}[Nerves of higher categories]\label{highernerves}
This construction also recovers the appropriate type of presheaves for the nerves of various types of higher categories:
\begin{itemize}
	\item the theory category for the free $n$-category monad is equivalent to Joyal's category $\Theta_n$, whose objects are free pasting diagrams of composable globular cells up to dimension $n$ \cite[Example 4.18]{weber2007familial}; 
	\item for the free double category monad it is $\Delta \times \Delta$ whose objects can be regarded as composable finite grids; and
	\item for the free (symmetric/nonsymmetric) multicategory monad it is the (ordinary/planar) dendroid category $\Omega$ whose objects are planar trees (in the symmetric case equipped with a permutation on the leaves) \cite[Example 2.14]{weber2007familial}. \qedhere
\end{itemize}
\end{example}

\begin{example}[Nerves of operad algebras]\label{operadtheory}
For a $\Sigma$-free symmetric operad $\O$, its theory category $\Theta_o$ is equivalent to the category of free $\O$-algebras with $N$ generators for each $N$ with $\O_N$ nonempty. When $\O$ has at least one operation in each arity, this is precisely the Lawvere theory associated to the finitary monad $o$ (or its opposite, depending on convention) \cite[Example 4.15]{weber2007familial}, and the nerve functor sends an $\O$-algebra to the corresponding model of the Lawvere theory in the classical sense. In this case $\Theta_o$ has finite coproducts and the Segal condition for copresheaves on $\Theta_o\op$ to be isomorphic to the nerve of an $\O$-algebra is precisely the product preservation condition for a model of the Lawvere theory as a functor $\Theta_o\op \to \smset$.
\end{example}


The category $\Theta_m$ has a wide subcategory $\Theta_m^0$ consisting of \emph{inert} morphisms $M \to M'$ of the form $m(f) \colon m(m[M]) \to m(m[M''])$ in the Eilenberg-Moore category or $\eta \circ f \colon m[M] \to m[M''] \to m(m[M''])$ in the Kleisli category. Equivalently, $\Theta_m^0$ is the full subcategory of $c\set$ spanned by the objects $m[M]$. There are functors
\[
c \To{i} \Theta_m^0 \To{j} \Theta_m,
\]
where $i$ is fully faithful and $j$ is identity on objects. The Segal condition for a $\Theta_m\op$-copresheaf $X$ can alternatively be expressed by the equation $j^\ast X \cong i_\ast(ji)^\ast X$, where $(-)^\ast$ denotes the restriction functor on copresheaves and $(-)_\ast$ denotes the functor on copresheaves given by right Kan extension (see \cite[Definition 4.4]{weber2007familial}). Indeed by definition, for an object $M$ of $\Theta_m$, $(j^\ast X)_M = X_M$ and $(i_\ast(ji)^\ast X)_M \cong \lim\limits_{z \in m[M]_C} X_C$.

\chapter{Theories and Nerves in $\ccatsharp$}\label{sec.nervesincatsharp}

The fact that theory categories are modeled in $\ccatsharp$ using the right coclosure follows from a more general relationship between the right coclosure and Kleisli categories.

\begin{theorem}\label{coclosurekleisli}
For $m$ a monad on $c$ in $\ccatsharp$ and $p$ any $(d,c)$-bicomodule, the carrier comonoid of the comonad $\lens{p}{p \tri_c m}$ on $d$ corresponds to the opposite of the full subcategory of the Kleisli category of $m$ spanned by the copresheaves on $c$ of the form $p[I]$, and the cofunctor from this category to $d$ sends the object $p[I]$ for $I \in p_D(1)$ to $D \in d(1)$.
\end{theorem}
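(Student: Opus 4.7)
The plan is to unpack the carrier polynomial of $\lens{p}{p\tri_c m}$ using \cref{coclosure}, identify its positions and directions with the objects and morphisms of the stated opposite Kleisli subcategory via \cref{comonadcofunctor}, and then check that the counit and comultiplication produced in the proof of \cref{thetacomonoid} correspond to Kleisli identities and Kleisli composition.

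First I would apply \cref{coclosure} to write
\[
\lens{p}{p \tri_c m} = \sum_{D \in d(1)}\sum_{I \in p_D(1)} \yon^{(p \tri_c m) \tri_c p[I]},
\]
with $p[I]$ viewed as a $(c,0)$-bicomodule. By associativity, the exponent is $p\tri_c m(p[I])$, and evaluating the familial functor corresponding to $p$ on the $c$-copresheaf $m(p[I])$ shows that the set of directions at position $(D,I)$ is
\[
\sum_{D' \in d(1)}\sum_{J \in p_{D'}(1)} \Hom_{c\set}(p[J],m(p[I])),
\]
which by the defining adjunction of the Kleisli category is the set of Kleisli morphisms $p[J]\to p[I]$. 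Invoking \cref{comonadcofunctor}, the positions $(D,I)$ then become the objects of the comonoid-category, identified with the copresheaves $p[I]$, and the cofunctor to $d$ acts on positions by $(D,I)\mapsto D$, as the theorem requires. The comonoid's codomain function extracts $(D',J)$ from a direction $(D',J,f)$ via the right $d$-coaction on the carrier, so at the level of objects and arrows this is precisely the opposite of the full subcategory of the Kleisli category of $m$ spanned by the $p[I]$.

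The main obstacle will be to verify that the counit and comultiplication of the carrier comonoid match Kleisli identities and Kleisli composition, so that the bijection just set up is a category isomorphism. For this I would lean on the universal properties of the coclosure used in the proof of \cref{thetacomonoid}: the counit $\lens{p}{p\tri_c m}\to d$ is adjoint to the canonical map $p\to p\tri_c m$ coming from the unit of $m$, which on the $I$-indexed component picks out exactly $\eta_{p[I]}\colon p[I]\to m(p[I])$, the identity at $p[I]$ in the opposite Kleisli category; the comultiplication is adjoint to a composite built from two applications of the coclosure unit followed by $\mu_m$, which on $I$-indexed directions unwinds to the opposite-Kleisli composition rule $(f,g)\mapsto \mu_{p[I]}\circ m(f)\circ g$ on Kleisli arrows $p[J]\to m(p[I])$ and $p[K]\to m(p[J])$. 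Once these two identifications are made, the comonoid coaxioms translate directly into the Kleisli category axioms via the unit and associativity equations of $m$; this bookkeeping, rather than any conceptual difficulty, is where the real work lies.
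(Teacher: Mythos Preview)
Your proposal is correct and follows the same route as the paper's own proof: unpack the carrier of $\lens{p}{p\tri_c m}$ via \cref{coclosure} and \cref{comonoidcategory}, identify the directions at $I$ with Kleisli arrows $p[J]\to p[I]$, and then check that the counit and comultiplication built in \cref{thetacomonoid} recover Kleisli identities and Kleisli composition, with the cofunctor to $d$ read off from \cref{comonadcofunctor}. One small slip: the codomain assignment in \cref{comonoidcategory} comes from the comultiplication $\delta_1$ of the comonoid, not from the right $d$-coaction on the carrier (the latter only records $D'\in d(1)$, not $J$); but since you go on to verify the comultiplication explicitly this does not affect the argument, and indeed you spell out that verification more fully than the paper's ``straightforward to check.''
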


\begin{proof}
Following Definitions \ref{comonoidcategory} and \ref{coclosure}, the objects of the category corresponding to $\lens{p}{p \tri_c m}$ are the elements $\sum_D p_D(1)$, denoted by $I \in p_D(1)$ for the appropriate choice of $D$, and the morphisms out of $I$ are given by the set 
\[
p \tri_c m \tri_c p[I] \cong \sum_{D' \in d(1)}\sum_{J \in p_{D'}(1)} \Hom_{c\set}\left(p[J],m \tri_c p[I]\right).
\]
It is straightforward to check from \label{thetacomonoid} that the codomain function sends an element of this set to $J \in p_{D'}(1)$; in other words, that the set of morphisms from $I$ to $J$ in this category consists of Kleisli morphisms for the monad $m$ from $p[J]$ to $p[I]$; and that composition and identities correspond to their analogues in the Kleisli category of $m$. The claim regarding the cofunctor to $D$ similarly follows directly from \cref{comonadcofunctor}.
\end{proof}

We can now conclude from \cref{coclosurekleisli} and \cref{theorycat} that the right coclosure in $\ccatsharp$ provides a construction of the theory category of a familial monad, and endows it with a universal property.

\begin{corollary}\label{coclosuretheory}
For $m$ a familial monad on $c\set$, the polynomial comonoid $\lens{m}{m \tri_c m}$ corresponds to the theory category $\Theta_m\op$ up to isomorphism. 

Therefore, $\Theta_m$ has the universal property that cofunctors $\Theta_m\op \to a$ commuting over $c$ for a cofunctor $a \to c$ correspond to transformations of familial endofunctors from $m$ to $a \tri_c m \tri_c m$ with respect to which the counit and comultiplication of $a$ regarded as a familial comonad commute with the unit and multiplication of the rightmost copy of $m$ in a suitable sense.\footnote{See the proof of \cref{nervecoalgebra} for a sense of what this looks like.}
\end{corollary}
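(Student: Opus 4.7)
The plan is to deduce the identification with $\Theta_m\op$ as a direct application of \cref{coclosurekleisli}, and then derive the universal property from the defining adjunction of the right coclosure together with the explicit comonad structure on $\lens{m}{m \tri_c m}$ constructed in the proof of \cref{thetacomonoid}.

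First, I would set $p \coloneqq m$ and $d \coloneqq c$ in \cref{coclosurekleisli}: the carrier comonoid of $\lens{m}{m \tri_c m}$ is then the opposite of the full subcategory of the Kleisli category of $m$ spanned by the copresheaves $m[M]$ for $M \in m_C(1)$, equipped with a cofunctor to $c$ sending $M \in m_C(1)$ to $C$. By the equivalent Kleisli description of $\Theta_m$ given in \cref{theorycat}, this category is precisely $\Theta_m\op$, and the cofunctor to $c$ agrees with the one induced by the inclusion $c\op \to \Theta_m\op$ discussed immediately after that definition.

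For the universal property, I would invoke the adjunction of \cref{coclosure} with $q \coloneqq m \tri_c m$, yielding a natural bijection between morphisms of $(c,c)$-bicomodules $\lens{m}{m \tri_c m} \to a$ and transformations $m \to a \tri_c m \tri_c m$. By \cref{comonadcofunctor}, the given cofunctor $a \to c$ endows $a$ with the structure of a comonad on $c$ in $\ccatsharp$ whose counit is that cofunctor itself, so a comonoid homomorphism $\lens{m}{m \tri_c m} \to a$ commuting over $c$ is the same data as a morphism of bicomodules that intertwines the counits and comultiplications on each side. Since the counit and comultiplication of $\lens{m}{m \tri_c m}$ are by construction (in the proof of \cref{thetacomonoid}) the adjoint transposes under the coclosure unit \eqref{eqn.coclosure_unit_counit} of diagrams built from the unit $\eta$ and multiplication $\mu$ of $m$, these preservation conditions transpose back into compatibility conditions on the transformation $m \to a \tri_c m \tri_c m$ with $\eta$, $\mu$, and the comonad structure on $a$.

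The main obstacle, although no new ideas beyond the adjunction and \cref{thetacomonoid} are required, will be carrying out these diagrammatic translations precisely: one must chase each of the counitality, coassociativity, and over-$c$ axioms for a comonoid homomorphism through the coclosure unit and verify that they dualize into statements in which the counit of $a$ absorbs a copy of $\eta$ and the comultiplication of $a$ interacts coherently with $\mu$ applied to the rightmost copy of $m$, as will be made concrete in the proof of \cref{nervecoalgebra}.
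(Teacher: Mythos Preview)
Your proposal is correct and follows the same approach as the paper, which in fact gives no explicit proof at all beyond the sentence ``We can now conclude from \cref{coclosurekleisli} and \cref{theorycat}\ldots'' introducing the corollary. Your first paragraph is exactly that deduction, and your discussion of the universal property, while more detailed than anything the paper provides, simply unpacks the coclosure adjunction together with the comonad--cofunctor correspondence of \cref{comonadcofunctor} in the way the paper implicitly intends.
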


As an example of this universal property, there is a cofunctor $\Theta_m\op \to c$ sending $M \in m_C(1)$ to $C \in c(1)$ which corresponds to the transformation $m \cong m \tri_c c \To{\id_m \tri \eta} m \tri_c m \cong c \tri_c m \tri_c m$. Another example is the vertical cofunctor $\Theta_m\op \to \left(\Theta_m^0\right)\op$ given by the bijective on objects functor $\Theta_m \from \Theta_m^0$, which can be modeled in $\ccatsharp$ using the following special case of \cref{coclosurekleisli}.

\begin{corollary}
For $p$ a $(d,c)$-bicomodule, category corresponding to the comonoid $\lens{p}{p}$ is the opposite of the full subcategory of $c\set$ spanned by the copresheaves of the form $p[I]$, over all $I\in p(1)$. 
\end{corollary}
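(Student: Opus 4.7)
The plan is to deduce this corollary by specializing \cref{coclosurekleisli} to the case where the monad $m$ on $c$ is the horizontal identity bicomodule $c\bifrom[c]c$ itself. First, I would observe that every object in any (pseudo-)double category carries its horizontal identity as a trivial monad, with unit and multiplication given by the unitor globular 2-cells; under the equivalence recalled in \cref{sec.prafunctors} between formal monads in $\ccatsharp$ and familial monads on copresheaf categories, this corresponds to the identity familial endofunctor on $c\set$, whose monad structure is trivial.

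Next, I would extract two immediate consequences of this choice of $m=c$. On the one hand, the Kleisli category of the identity monad on any category is the category itself, so the full subcategory of the Kleisli category of $m$ spanned by the objects $p[I]$ is exactly the full subcategory of $c\set$ on those copresheaves. On the other hand, the right unitor of $\ccatsharp$ provides a canonical isomorphism $p\tri_c c\cong p$ of $(d,c)$-bicomodules, whence $\lens{p}{p\tri_c c}\cong \lens{p}{p}$ by functoriality of the right coclosure.

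Combining these two observations, \cref{coclosurekleisli} delivers the stated identification of the carrier comonoid of $\lens{p}{p}$ with the opposite of the full subcategory of $c\set$ spanned by the copresheaves $p[I]$, together with the cofunctor to $d$ sending $p[I]$ (for $I\in p_D(1)$) to $D$. The only subtle point, which I expect to be the main (though still minor) obstacle, is that the isomorphism $\lens{p}{p\tri_c c}\cong \lens{p}{p}$ needs to transport the comonad structure from \cref{thetacomonoid} correctly; but since that comonad structure is built entirely from the universal property of the coclosure together with the unit and multiplication of $m$, both of which collapse to identity 2-cells when $m=c$, this compatibility is automatic.
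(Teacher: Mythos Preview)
Your proposal is correct and matches the paper's own approach: the corollary is stated as a special case of \cref{coclosurekleisli} with $m=c$, and the paper has already noted (just after \cref{thetacomonoid}) that $\lens{p}{p}$ is the comonoid obtained from that lemma in this degenerate case. Your extra remark about transporting the comonad structure along $\lens{p}{p\tri_c c}\cong\lens{p}{p}$ is a reasonable piece of hygiene that the paper leaves implicit.
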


Given this, under the universal property from \cref{coclosuretheory} this cofunctor $\lens{m}{m \tri_c m} \to \lens{m}{m}$ corresponds to the transformation $m \to \lens{m}{m} \tri_c m \To{\id \tri_c \eta} \lens{m}{m} \tri_c m \tri_c m$.


We are now ready to express the nerve functor in the language of $\ccatsharp$. The key idea behind this construction is that for an $m$-algebra $X$, the set of elements of its nerve $N_mX$ is the set $\sum\limits_{C \in c(1)}\sum\limits_{M \in m_C(1)} \Hom_{c\set}(m[M],X)$, which is precisely the set of elements of $m(X)$. The key feature of this construction in $\ccatsharp$ is therefore to exhibit this copresheaf on $c$ as a $\lens{m}{m \tri_c m}$-comodule, allowing it to lift to the structure of a model of the theory $\Theta_m$ using \cref{lenscoalgebras}.



\begin{theorem}\label{nervecoalgebra}
For $m$ a monad on $c$ in $\ccatsharp$, $p$ a $(d,c)$-bicomodule, and $x$ a $(c,b)$-bicomodule with the structure of a left $m$-module, the transformation of bicomodules in \eqref{eqn.nerve} endows $p \tri_c x$ with the structure of a left $\lens{p}{p \tri_c m}$-comodule.
\begin{equation}\label{eqn.nerve}
\begin{tikzcd}
d \ar[bimr-biml,""{name=S, below}]{rrr}{p} \ar[bimr-biml]{dr}[swap]{\lens{p}{p \tri_c m}} & & & c \rar[bimr-biml]{x} \dar[Rightarrow,shorten >=9,shorten <=3] & b \\
& d \rar[bimr-biml,""{name=T, above},swap]{p} & c \ar[bimr-biml]{ur}[description]{m} \ar[bimr-biml,bend right=20]{urr}[swap]{x} & {}
\arrow[Rightarrow,shorten=4,from=S,to=T]
\end{tikzcd}
\end{equation}
\end{theorem}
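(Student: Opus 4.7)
The plan is to extract from the 2-cell in \eqref{eqn.nerve} the explicit left coaction
\[
\beta \colon p \tri_c x \xrightarrow{\eta \tri_c x} \lens{p}{p \tri_c m} \tri_d p \tri_c m \tri_c x \xrightarrow{\lens{p}{p \tri_c m} \tri_d p \tri_c \alpha} \lens{p}{p \tri_c m} \tri_d p \tri_c x,
\]
where $\eta\colon p \to \lens{p}{p \tri_c m} \tri_d p \tri_c m$ is the unit of the right coclosure adjunction from \eqref{eqn.coclosure_unit_counit} and $\alpha \colon m \tri_c x \to x$ is the given left $m$-action. Since $\eta$ is a morphism of $(d,c)$-bicomodules and $\alpha$ is a morphism of $(c,b)$-bicomodules, $\beta$ is automatically a morphism of $(d,b)$-bicomodules, so what remains is to verify the counit and coassociativity axioms of $\beta$ as a left $\lens{p}{p \tri_c m}$-coaction.

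For counitality, I will apply the counit $\epsilon \colon \lens{p}{p \tri_c m} \to d$ from \cref{thetacomonoid} to $\beta$. By construction, $\epsilon$ corresponds under the coclosure adjunction to the map $p \cong p \tri_c c \xrightarrow{p \tri_c \eta_m} p \tri_c m$, so a triangle identity of the adjunction identifies $(\epsilon \tri_d p \tri_c m) \circ \eta$ with $p \tri_c \eta_m$. Thus the counit axiom reduces to the composite $(p \tri_c \alpha) \circ (p \tri_c \eta_m \tri_c x)$ being the identity on $p \tri_c x$, which is exactly the left unit axiom for the $m$-module $x$. For coassociativity, I expand both composites $p \tri_c x \to \lens{p}{p \tri_c m} \tri_d \lens{p}{p \tri_c m} \tri_d p \tri_c x$ --- one by iterating $\beta$, the other by postcomposing with $\delta \tri_d p \tri_c x$ --- using the description of $\delta$ from \cref{thetacomonoid} as two applications of $\eta$ followed by $\mu_m$. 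After applying naturality of $\eta$ and the functoriality of bicomodule composition, the diagram reduces to the equality $\alpha \circ (\mu_m \tri_c x) = \alpha \circ (m \tri_c \alpha)$ of the two composites $m \tri_c m \tri_c x \to x$, which is the associativity axiom for $\alpha$.

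The main obstacle is bookkeeping: both $\epsilon$ and $\delta$ from \cref{thetacomonoid} are specified only by their images under the coclosure adjunction, so each comodule axiom must first be re-expressed purely in terms of $\eta$, $\eta_m$, $\mu_m$, and $\alpha$ by repeated use of the triangle identities for the adjunction $\lens{-}{p \tri_c m} \dashv - \tri_d p \tri_c m$. Once this translation is carried out, both axioms reduce directly to the corresponding axioms of the left $m$-module $x$, so no further structural input beyond \cref{thetacomonoid} and the module axioms is needed.
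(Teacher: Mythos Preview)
Your proposal is correct and follows essentially the same approach as the paper: both define the coaction as the adjunction unit $\eta$ followed by the $m$-action, then reduce the counit and coassociativity axioms respectively to the unit and associativity laws of the left $m$-module $x$ via the universal property of the coclosure and the explicit description of $\epsilon,\delta$ from \cref{thetacomonoid}. The only cosmetic difference is that the paper phrases the reductions diagrammatically using the factorization form of the coclosure universal property, whereas you invoke the equivalent triangle-identity/adjunct formulation.
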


Note that when $b=0$, $x=X$ is an ordinary $m$-algebra $m(X) \to X$ and this construction generalizes the construction of the nerve on elements discussed above.

\begin{proof}
Let a $(c,b)$-bicomodule $x$ have an $m$-action morphism $\psi \colon m \tri_c x \to x$, and consider the coaction morphism $p \tri_c x \to \lens{p}{p \tri_c m} \tri_d p \tri_c x$ of $(d,b)$-bicomodules from \eqref{eqn.nerve}.

To show that $p \tri_c x$ is a comodule, we demonstrate that the coaction commutes with counit and comultiplication. To do so, we use the universal property of the coclosure, namely that a transformation $p \to q \tri_d p \tri_c m$ for $q$ a $(d,d)$-bicomodule, as on the left in \eqref{eqn.coclosureproperty}, factors uniquely through a transformation $\lens{p}{p \tri_c m} \to q$, as on the right in \eqref{eqn.coclosureproperty}.
\begin{equation}\label{eqn.coclosureproperty}
\begin{tikzcd}
d \ar[bimr-biml,""{name=S, below},shift left=1]{rrr}{p} \ar[bimr-biml, bend right=20]{dr}[swap]{q} & & & c \\
& d \rar[bimr-biml,""{name=T, above},swap]{p} & c \ar[bimr-biml]{ur}[description]{m} 
\arrow[Rightarrow,shorten=4,from=S,to=T]
\end{tikzcd}\qquad=\qquad\begin{tikzcd}
d \ar[bimr-biml,""{name=S, below},shift left=1]{rrr}{p} \ar[bimr-biml,""{name=U, below}]{dr}[inner sep=-1]{\lens{p}{p \tri_c m}} \ar[bimr-biml, bend right=40, shift right=2,""{name=V, above}]{dr}[swap]{q} & & & c \\
& d \rar[bimr-biml,""{name=T, above},swap]{p} & c \ar[bimr-biml]{ur}[description]{m} 
\arrow[Rightarrow,shorten=4,from=S,to=T]
\arrow[Rightarrow,shorten=-1,from=U,to=V]
\end{tikzcd}
\end{equation}


For the counit, the composite $p \tri_c x \to p \tri_c m \tri_c x \to p \tri_c x$ of the unit of $m$ with the $m$-action map of $x$, on the left in \eqref{eqn.counitcheck}, is the identity by the unit law for $m$-modules. Therefore the composite of the coaction with the counit of $\lens{p}{p \tri_c m}$ as defined in \cref{thetacomonoid}, on the right in \eqref{eqn.counitcheck}, is the identity as desired. 
\begin{equation}\label{eqn.counitcheck}
\begin{tikzcd}
d \ar[bimr-biml,""{name=S, below},shift left=1]{rrr}{p} \ar[bimr-biml, bend right=30]{dr}[swap]{d} & & & c \rar[bimr-biml]{x} \dar[Rightarrow,shorten >=9,shorten <=3] & b \\
& d \rar[bimr-biml,""{name=T, above},swap]{p} & c \ar[bimr-biml]{ur}[description]{m} \ar[bimr-biml,bend right=20]{urr}[swap]{x} & {}
\arrow[Rightarrow,shorten=4,from=S,to=T,"p \tri \eta"]
\end{tikzcd}\;\;=\quad\begin{tikzcd}
d \ar[bimr-biml,""{name=S, below},shift left=1]{rrr}{p} \ar[bimr-biml,""{name=U, below}]{dr}[inner sep=-1]{\lens{p}{p \tri_c m}} \ar[bimr-biml, bend right=40, shift right=2,""{name=V, above}]{dr}[swap]{d} & & & c \rar[bimr-biml]{x} \dar[Rightarrow,shorten >=9,shorten <=3] & b \\
& d \rar[bimr-biml,""{name=T, above},swap]{p} & c \ar[bimr-biml]{ur}[description]{m} \ar[bimr-biml,bend right=20]{urr}[swap]{x} & {}
\arrow[Rightarrow,shorten=4,from=S,to=T]
\arrow[Rightarrow,shorten=-1,from=U,to=V]
\end{tikzcd}
\end{equation}

For the comultiplication, the composite
\[
p \tri_c x \to \lens{p}{p \tri_c m} \tri_d \lens{p}{p \tri_c m} \tri_d p \tri_c \left(m \tri_c m\right) \tri_c x
\]
\[
\To{\cdots \tri_c \mu \tri_c x} \lens{p}{p \tri_c m} \tri_d \lens{p}{p \tri_c m} \tri_d p \tri_c m \tri_c x \To{\cdots \tri_c \psi} \lens{p}{p \tri_c m} \tri_d \lens{p}{p \tri_c m} \tri_d p \tri_c x
\]
on top in \eqref{eqn.comultcheck} agrees with the composite 
\[
p \tri_c x \to \lens{p}{p \tri_c m} \tri_d \lens{p}{p \tri_c m} \tri_d p \tri_c m \tri_c \left(m \tri_c x\right)
\]
\[
\To{\cdots \tri_c m \tri_c \psi} \lens{p}{p \tri_c m} \tri_d \lens{p}{p \tri_c m} \tri_d p \tri_c m \tri_c x \To{\cdots \tri_c \psi} \lens{p}{p \tri_c m} \tri_d \lens{p}{p \tri_c m} \tri_d p \tri_c x
\]
on the bottom in \eqref{eqn.comultcheck} by the multiplication law for $m$-modules, so the composite of the coaction with the comultiplication of $\lens{p}{p \tri_c m}$ agrees with the repeated application of the coaction as desired.
\begin{equation}\label{eqn.comultcheck}
\begin{tikzcd}
d \ar[bimr-biml,""{name=S, below},shift left=1]{rrrrr}{p} \ar[bimr-biml,""{name=U, below}]{dr}[swap]{\lens{p}{p \tri_c m}} & & & & & c \rar[bimr-biml]{x} \ar[Rightarrow,shorten >=-4,shorten <=3, shift left=3]{d} & b \\
& d \ar[bimr-biml,""{name=T, above},""{name=U, below},swap]{rrr}[description]{p} \ar[bimr-biml]{dr}[swap]{\lens{p}{p \tri_c m}} & & & c \ar[bimr-biml]{ur}[description]{m} \ar[Rightarrow,shorten >=36,shorten <=-5]{dr} & {} \\
& & d \rar[bimr-biml,""{name=V, above},swap]{p} & c \ar[bimr-biml]{ur}[description]{m} \ar[bimr-biml,bend right=30]{uurr}[swap]{m} \ar[bimr-biml,bend right=35]{uurrr}[swap]{x} & & {} 
\arrow[Rightarrow,shorten=4,from=S,to=T]
\arrow[Rightarrow,shorten=4,from=U,to=V]
\end{tikzcd}
\end{equation}
\[
=\begin{tikzcd}
d \ar[bimr-biml,""{name=S, below},shift left=1]{rrr}{p} \ar[bimr-biml,""{name=U, below}]{dr}[inner sep=-1]{\lens{p}{p \tri_c m}} \ar[bimr-biml, bend right=40, shift right=2,""{name=V, above}]{dr}[swap]{\lens{p}{p \tri_c m} \tri_c \lens{p}{p \tri_c m}} & & & c \rar[bimr-biml]{x} \dar[Rightarrow,shorten >=9,shorten <=3] & b \\
& d \rar[bimr-biml,""{name=T, above},swap]{p} & c \ar[bimr-biml]{ur}[description]{m} \ar[bimr-biml,bend right=20]{urr}[swap]{x} & {}
\arrow[Rightarrow,shorten=4,from=S,to=T]
\arrow[Rightarrow,shorten=-1,from=U,to=V]
\end{tikzcd}
\]
\end{proof}

Applying \cref{lenscoalgebras} when $b=0$, this makes $p \tri_c X$ a presheaf over the category $\Theta_m^p$ dual to $\lens{p}{p \tri_c m}$, which is the full subcategory of the Kleisli category of $m$ (or equivalently Eilenberg-Moore category) spanned by the copresheaves of the form $p[I]$ on $c$ (respectively, the free algebras $m(p[I])$). The nerve functor constructed at the level of generality in \cref{nervecoalgebra} is the right adjoint from $m$-algebras to presheaves over $\Theta_m^p$ induced by the inclusion of this subcategory into $m\alg$.

\begin{corollary}\label{isnerve}
In the case when $p=m$ and $b=0$, this assignment of a presheaf on $\Theta_m$ to an $m$-algebra agrees with the nerve given by Weber in \cite[Definition 4.4]{weber2007familial}. Furthermore, in the case of a general bicomodule $p$ this functor agrees with the evident generalization of Weber's nerve functor described above. 
\end{corollary}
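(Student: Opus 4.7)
The plan is to verify this agreement pointwise, by unpacking the construction of \cref{nervecoalgebra} via \cref{lenscoalgebras} at the level of elements and matching the result with Weber's hom-set description of the nerve. By familiality of $p$, the set of elements of the $c$-copresheaf $p \tri_c X$ decomposes as
\[
(p \tri_c X)(1) \cong \sum_{D \in d(1)} \sum_{I \in p_D(1)} \Hom_{c\set}(p[I], X),
\]
and by \cref{coclosurekleisli} the objects of $\Theta_m^p$ are precisely the pairs $(D, I \in p_D(1))$, so the summands of this decomposition are canonically indexed by $\ob(\Theta_m^p)$ with fibers matching Weber's formula $\Hom_{c\set}(p[I], X)$.

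Next I would check that the $\Theta_m^p$-presheaf structure furnished by \cref{lenscoalgebras} recovers both this object-level identification and the Kleisli-precomposition functoriality of Weber's nerve. For objects, by \cref{lenscoalgebras} the classifying function $(p \tri_c X)(1) \to \lens{p}{p \tri_c m}(1)$ is the coaction followed by the terminal map on the rightmost factor; unraveling the explicit form of the coaction from \eqref{eqn.nerve} as the composite of the coclosure unit $p \to \lens{p}{p \tri_c m} \tri_d p \tri_c m$ with the $m$-action $m \tri_c X \to X$, and using formula \eqref{eqn.coclosure} for the carrier of the coclosure, one sees that the element $(D, I, f \colon p[I] \to X)$ is sent to $(D, I)$. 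For morphisms, a morphism $\alpha \colon J \to I$ in $\Theta_m^p$ corresponds by \cref{coclosurekleisli} to a Kleisli morphism $\bar\alpha \colon p[J] \to m \tri_c p[I]$; tracing the direction component of the coaction through the $m$-action on $X$ shows that the induced reindexing $\Hom_{c\set}(p[I], X) \to \Hom_{c\set}(p[J], X)$ sends $f$ to the composite $p[J] \to m \tri_c p[I] \to m \tri_c X \to X$ formed from $\bar\alpha$, $m(f)$, and the algebra structure map, which is precisely Weber's formula.

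The general case $p \neq m$ requires no further ideas, since \cref{coclosurekleisli,lenscoalgebras,nervecoalgebra} all apply to an arbitrary bicomodule $p$ and the generalized nerve is defined by the same hom-set formula on $\Theta_m^p$. I expect the main obstacle to be the direction-level bookkeeping in the second step: the directions of $\lens{p}{p \tri_c m}$ at position $(D, I)$ are parameterized by positions of $(p \tri_c m) \tri_c p[I]$, which by unpacking bicomodule composition encode exactly the Kleisli morphisms into $p[I]$ from copresheaves of the form $p[J]$, and verifying that these directions act via the stated precomposition requires careful handling of the colimits of arities appearing inside the $\tri_c$-compositions.
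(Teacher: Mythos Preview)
Your proposal is correct and follows essentially the same approach as the paper: both unpack the coaction of \cref{nervecoalgebra} by tracing the coclosure unit $p \to \lens{p}{p \tri_c m} \tri_d p \tri_c m$ on positions and directions and then composing with the $m$-action on $X$, arriving at the identification of $I$-cells with maps $p[I] \to X$ and of the reindexing along a morphism in $\Theta_m^p$ with Kleisli precomposition. The only difference is that the paper writes out the unit explicitly at the position and direction level, whereas you leave that step as acknowledged bookkeeping; carrying it out yields exactly the formulas you predict.
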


\begin{proof}
We show that for general $p$ the copresheaf $p \tri_c X$ has $I$-cells given by maps $m \tri_c p[I] \to X$ of $m$-algebras and for a morphism $f \colon m \tri_c p[I'] \to m \tri_c p[I]$ of $m$-algebras, the corresponding function from $I$-cells to $I'$-cells is given by precomposition with $f$. 

Observe that on positions, the unit morphism $p \to \lens{p}{p \tri_c m} \tri_d p \tri_c m$ sends $I \in p(1)$ to the tuple
\[
\left(I \in p(1), (I' \in p(1))_{f \colon p[I'] \to m \tri_c p[I]}, \pi \circ f \colon p[I'] \to m \tri_c p[I] \to m(1) \right),
\]
where $f$ is a morphism of $c$-copresheaves, and on directions sends 
\[
(I',v \in p[I'], z \in m[\pi \circ f(v)])
\]
to $f(v,z)$. The coaction thus sends the pair $(I, p[I] \to X)$ to 
\[
(I, (I')_f, (p[I'] \to m \tri_c p[I] \to m \tri_c X \to X)_{f}).
\]
As a $\lens{p}{p \tri_c m}$-copresheaf then its $I$-cells are the maps $p[I] \to X$ of $c$-copresheaves, equivalently maps $m \tri_c p[I] \to X$ of $m$-algebras, and the application of a morphism $p[I'] \to m \tri_c p[I]$ to $p[I] \to X$ is the Kleisli composite, equivalently the composite of maps 
\[
m \tri_c p[I'] \to m \tri_c p[I] \to X
\]
of $m$-algebras as desired.
\end{proof}

While most common examples of nerves of higher categories are of the form $m \tri_c x$, generalizing to $p \tri_c x$ allows for more precise control over which patterns are probed for and how many times they are counted.

\begin{example}[Models of Lawvere theories for operad algebras]
For $\O$ a $\Sigma$-free symmetric operad and $\List$ the polynomial $\sum\limits_{N \in \nn} \yon^{\underline N}$ corresponding to the terminal \emph{non}symmetric operad, the category $\lens{\List}{\List \tri o}$ is the opposite of the full subcategory of $\O$-algebras spanned by the free $\O$-algebras on $N$ generators for all $N \in \nn$. This is precisely the Lawvere theory corresponding to the finitary monad associated to $\O$. As discussed in \cref{operadtheory}, the theory category $\Theta_o$, by comparison, may have many more objects (one for each operation in $O$ rather than one for each finite arity) and does not necessarily include all finite arities as $\O$ may have no $N$-ary operations for certain $N$. Hence the nerve functor associated to this category agrees precisely with the classical construction of a model for the Lawvere theory from an $o$-algebra. This construction also applies to any ordinary polynomial monad on $\smset$, where symmetric operads represent those which are cartesian.
\end{example}

\begin{remark}
Weber's Nerve Theorem \cite[Theorem 4.10]{weber2007familial} shows that the nerve functor is fully faithful, though we have not yet been able to express this proof in the language of $\ccatsharp$. It could also be interesting to consider which choices of $p$ admit an analogous result, or equivalently when $\Theta_m^p$ is dense in $m$-algebras.
\end{remark}

\begin{remark}
The Segal condition can be expressed in $\ccatsharp$, albeit in a slightly different form than stated at the end of \cref{sec.theories}. In \cite[Definition 4.4]{weber2007familial}, the Segal condition for $X$ is defined as the $(\Theta_m^0)\op$-copresheaf $j^\ast X$ being in the image of $i_\ast$, though in that case it will always be the image of specifically $(ji)^\ast X$. This version however is more convenient for working in $\ccatsharp$, where fully faithful functors and restrictions of copresheaves along them are more challenging to represent than functors such as $j^\ast$ and $i_\ast$. The Segal condition can then be expressed, at the level of generality of \cref{nervecoalgebra}, as the existence of a $c$-copresheaf $X'$ making the diagram below commute up to isomorphism in $\ccatsharp$. 
\[
\begin{tikzcd}[row sep=0, column sep=large]
& \ar[bimr-biml]{dr}{X} \lens{p}{p \tri_c m} \\
\lens{p}{p} \ar[bimr-biml]{ur}{\lens{p}{p \tri_c \eta}} \ar[bimr-biml]{dr}[swap]{p} & & 0 \\
& c \ar[bimr-biml]{ur}[swap]{X'}
\end{tikzcd}
\]
Composition with $\lens{p}{p \tri_c \eta}$ corresponds to restriction along the functor from the category of copresheaves of the form $p[I]$ to that of free $m$-algebras of the form $m(p[I])$, and thus agrees with $j^\ast$ when $p=m$. Similarly, when $p=m$ composition with the $\left(\lens{p}{p},c\right)$-bicomodule $p$ agrees with the right Kan extension $i_\ast$.
\end{remark}

\chapter{Algebraic familial monads}\label{sec.algebraic}

We now consider Lack and Street's construction of the free completion of $\ccatsharp$ (regarded as a bicategory) under Eilenberg-Moore objects, $\EM(\ccatsharp)$. This bicategory is very similar to that of formal monads in $\ccatsharp$ (as described in \cite{Street:1972a,lack2002formal}), but has a slightly adjusted definition of the 2-cells between monad morphisms. In $\ccatsharp$, this construction produces a bicategory which can be viewed (through the lens of the right $\tri$-coclosure) as having as objects familial monads, as morphisms functors between categories of algebras of familial monads which are themselves \emph{familial} in a suitable sense, and as 2-cells natural transformations between these functors. Monads in $\EM(\ccatsharp)$ include familiar constructions such as the free symmetric monoidal category monad on $\smcat$, and by a formal result of Lack and Street behave like an elegant generalization distributive laws in $\ccatsharp$.

\begin{definition}[{\cite[end of section 1]{lack2002formal}}]\label{EM}
The bicategory $\EM(\ccatsharp)$ has\footnote{While we wish to consider this construction in the setting of bicategories as discussed in \cite[Remark 1.1]{lack2002formal}, for convenience and readability we suppress the unitors and associators from our diagrams. Alternatively, as the underlying bicategory of $\ccatsharp$ is biequivalent to the sub-2-category of $\lgcat$ consisting of copresheaf categories, familial functors between them, and natural transformations, the construction for 2-categories can just as well be applied in that setting without modification.}
\begin{itemize}
	\item as objects, monads $(c,c \bifrom[m] c)$ in $\ccatsharp$;
	\item as morphisms, monad morphisms $(p,\alpha) \colon (c,m) \from (d,n)$ in $\ccatsharp$, where $p \colon c \bifrom d$ and $\alpha \colon m \tri_c p \Rightarrow p \tri_d n$ satisfy equations \eqref{eqn.monadmapunit} and \eqref{eqn.monadmapmult};
\begin{equation}\label{eqn.monadmapunit}
\begin{tikzcd}
c \dar[biml-bimr,swap]{c} & \lar[biml-bimr,""{name=S, below},swap]{c} c \dar{m} & d \lar[biml-bimr,""{name=U, below},swap]{p} \dar[biml-bimr]{n} \\
c & \lar[biml-bimr,""{name=T, above}]{c} c & d \lar[biml-bimr,""{name=V, above}]{p}
\arrow[Rightarrow,shorten=5,from=S,to=T,"\eta^m"]
\arrow[Rightarrow,shorten=5,from=U,to=V,"\alpha"]
\end{tikzcd} \qquad = \qquad \begin{tikzcd}
c \dar[biml-bimr,swap]{c} & \lar[biml-bimr,""{name=S, below},swap]{p} d \dar[biml-bimr]{d} & d \lar[biml-bimr,""{name=U, below},swap]{d} \dar[biml-bimr]{n} \\
c & \lar[biml-bimr,""{name=T, above}]{p} d & d \lar[biml-bimr,""{name=V, above}]{d}
\arrow[equals,shorten=5,from=S,to=T]
\arrow[Rightarrow,shorten=5,from=U,to=V,"\eta^n"]
\end{tikzcd}
\end{equation}
\begin{equation}\label{eqn.monadmapmult}
\begin{tikzcd}
c \dar[biml-bimr,swap]{m} & \lar[biml-bimr,""{name=S, below},swap]{c} c \ar[biml-bimr]{dd}{m} & d \lar[biml-bimr,""{name=W, below},swap]{p} \ar[biml-bimr]{dd}{n} \\
c \dar[biml-bimr,swap]{m} \\
c & \lar[biml-bimr,""{name=T, above}]{c} c & d \lar[biml-bimr,""{name=X, above}]{p}
\arrow[Rightarrow,shorten=10,from=S,to=T,"\mu^m"]
\arrow[Rightarrow,shorten=10,from=W,to=X,"\alpha"]
\end{tikzcd} \qquad = \qquad\begin{tikzcd}
c \dar[biml-bimr,swap]{m} & \lar[biml-bimr,""{name=S, below},swap]{p} d \dar[biml-bimr]{n} & d \lar[biml-bimr,""{name=W, below},swap]{d} \ar[biml-bimr]{dd}{n} \\
c \dar[biml-bimr,swap]{m} & \ar[biml-bimr,""{name=T, above},""{name=U, below}]{l}[description]{p} d \dar[biml-bimr]{n} \\
c & \lar[biml-bimr,""{name=V, above}]{p} d & d \lar[biml-bimr,""{name=X, above}]{d}
\arrow[Rightarrow,shorten=5,from=S,to=T,"\alpha"]
\arrow[Rightarrow,shorten=5,from=U,to=V,"\alpha"]
\arrow[Rightarrow,shorten=10,from=W,to=X,"\mu^n"]
\end{tikzcd}
\end{equation}
	\item as 2-cells, the variation on transformations of monad morphisms given by $\rho \colon (p,\alpha) \Rightarrow (q,\beta)$, where $\rho$ is a transformation $p \Rightarrow q \tri_d n$ satisfying equation \eqref{eqn.monadtransf};
\begin{equation}\label{eqn.monadtransf}
\begin{tikzcd}
c \dar[biml-bimr,swap]{m} & \lar[biml-bimr,""{name=S, below},swap]{p} d \dar[biml-bimr]{n} & d \lar[biml-bimr,""{name=W, below},swap]{d} \ar[biml-bimr]{dd}{n} \\
c \dar[biml-bimr]{c} & \ar[biml-bimr,""{name=T, above},""{name=U, below}]{l}[description]{p} d \dar[biml-bimr]{n} \\
c & \lar[biml-bimr,""{name=V, above}]{q} d & d \lar[biml-bimr,""{name=X, above}]{d}
\arrow[Rightarrow,shorten=5,from=S,to=T,"\alpha"]
\arrow[Rightarrow,shorten=5,from=U,to=V,"\rho"]
\arrow[Rightarrow,shorten=10,from=W,to=X,"\mu^n"]
\end{tikzcd} \qquad = \qquad \begin{tikzcd}
c \dar[biml-bimr,swap]{c} & \lar[biml-bimr,""{name=S, below},swap]{p} d \dar[biml-bimr]{n} & d \lar[biml-bimr,""{name=W, below},swap]{d} \ar[biml-bimr]{dd}{n} \\
c \dar[biml-bimr,swap]{m} & \ar[biml-bimr,""{name=T, above},""{name=U, below}]{l}[description]{q} d \dar[biml-bimr]{n} \\
c & \lar[biml-bimr,""{name=V, above}]{q} d & d \lar[biml-bimr,biml-bimr,""{name=X, above}]{d}
\arrow[Rightarrow,shorten=5,from=S,to=T,"\rho"]
\arrow[Rightarrow,shorten=5,from=U,to=V,"\beta"]
\arrow[Rightarrow,shorten=10,from=W,to=X,"\mu^n"]
\end{tikzcd}
\end{equation}
	\item identity morphisms are given by identities in $\ccatsharp$;
	\item composition of morphisms $(c,m) \From{(p,\alpha)} (d,n) \From{(q,\beta)} (e,o)$ is given by the composite morphism $p \tri_d q$ and the composite 2-cell in \eqref{eqn.monadmapcomp};
\begin{equation}\label{eqn.monadmapcomp}
\begin{tikzcd}
c \dar[biml-bimr,swap]{m} & \lar[biml-bimr,""{name=S, below},swap]{p} d \dar[biml-bimr]{n} & \lar[biml-bimr,""{name=U, below},swap]{q} e \dar[biml-bimr]{o} \\
c & \lar[biml-bimr,""{name=T, above}]{p} d & \lar[biml-bimr,""{name=V, above}]{q} e
\arrow[Rightarrow,shorten=5,from=S,to=T,"\alpha"]
\arrow[Rightarrow,shorten=5,from=U,to=V,"\beta"]
\end{tikzcd}
\end{equation}
	\item identity 2-cells $(p,\alpha) \Rightarrow (p,\alpha)$ for $(p,\alpha) \colon (c,m) \from (d,n)$ are given by the unit of $n$, $p \tri_d \eta^n \colon p \Rightarrow p \circ n$; 
	\item vertical composition of 2-cells $(p,\alpha) \xRightarrow{\rho} (q,\beta) \xRightarrow{\sigma} (r,\gamma)$ is given by the composite 2-cell on the left in \eqref{eqn.monadtransfvert};
\begin{equation}\label{eqn.monadtransfvert}
\begin{tikzcd}
c \dar[biml-bimr,swap]{c} & \lar[biml-bimr,""{name=S, below},swap]{p} d \dar[biml-bimr]{n} & d \lar[biml-bimr,""{name=W, below},swap]{d}] \ar[biml-bimr]{dd}{n} \\
c \dar[biml-bimr,swap]{c} & \ar[biml-bimr,""{name=T, above},""{name=U, below}]{l}[description]{q} d \dar[biml-bimr]{n} \\
c & \lar[biml-bimr,""{name=V, above}]{r} d & d \lar[biml-bimr,""{name=X, above}]{d}
\arrow[Rightarrow,shorten=5,from=S,to=T,"\rho"]
\arrow[Rightarrow,shorten=5,from=U,to=V,"\sigma"]
\arrow[Rightarrow,shorten=10,from=W,to=X,"\mu^n"]
\end{tikzcd}
\end{equation}
	\item and horizontal composition of 2-cells $(p,\alpha) \xRightarrow{\rho} (q,\beta)$ and $(r,\gamma) \xRightarrow{\sigma} (s,\delta)$ is given by either of the composite 2-cells in the center or right in \eqref{eqn.monadtransfhor}, which agree by \eqref{eqn.monadtransf}.
\begin{equation}\label{eqn.monadtransfhor}
\begin{tikzcd}
c \dar[biml-bimr,swap]{c} & \lar[biml-bimr,""{name=S, below},swap]{p} d \dar[biml-bimr]{d} & \lar[biml-bimr,""{name=Y, below},swap]{r} e \dar[biml-bimr]{o} & e \lar[biml-bimr,""{name=W, below},swap]{e} \ar[biml-bimr]{dd}{o} \\
c \dar[biml-bimr,swap]{c} & \ar[biml-bimr,""{name=T, above},""{name=U, below}]{l}[description]{p} d \dar[biml-bimr]{n} & \ar[biml-bimr,""{name=Z, above},""{name=A, below}]{l}[description]{s} e \dar[biml-bimr]{o} \\
c & \lar[biml-bimr,""{name=V, above}]{q} d & \lar[biml-bimr,""{name=B, above}]{s} e & e \lar[biml-bimr,""{name=X, above}]{e}
\arrow[equals,shorten=5,from=S,to=T]
\arrow[Rightarrow,shorten=5,from=U,to=V,"\rho"]
\arrow[Rightarrow,shorten=5,from=Y,to=Z,"\sigma"]
\arrow[Rightarrow,shorten=5,from=A,to=B,"\delta"]
\arrow[Rightarrow,shorten=10,from=W,to=X,"\mu^o"]
\end{tikzcd}\qquad=\qquad\begin{tikzcd}
c \dar[biml-bimr,swap]{c} & \lar[biml-bimr,""{name=S, below},swap]{p} d \dar[biml-bimr]{n} & \lar[biml-bimr,""{name=Y, below},swap]{r} e \dar[biml-bimr]{o} & e \lar[biml-bimr,""{name=W, below}]{e} \ar[biml-bimr]{dd}{o} \\
c \dar[biml-bimr,swap]{c} & \ar[biml-bimr,""{name=T, above},""{name=U, below}]{l}[description]{q} d \dar[biml-bimr]{d} & \ar[biml-bimr,""{name=Z, above},""{name=A, below}]{l}[description]{r} e \dar[biml-bimr]{o} \\
c & \lar[biml-bimr,""{name=V, above}]{q} d & \lar[biml-bimr,""{name=B, above}]{s} e & e \lar[biml-bimr,""{name=X, above}]{e}
\arrow[Rightarrow,shorten=5,from=S,to=T,"\rho"]
\arrow[equals,shorten=5,from=U,to=V]
\arrow[Rightarrow,shorten=5,from=Y,to=Z,"\gamma"]
\arrow[Rightarrow,shorten=5,from=A,to=B,"\sigma"]
\arrow[Rightarrow,shorten=10,from=W,to=X,"\mu^o"]
\end{tikzcd}
\end{equation}\qedhere
\end{itemize}
\end{definition}

\begin{definition}
We will write $m\alg_a$ for the category of bicomodules $c \bifrom[x] a$ equipped with the structure of a left $m$-module. 
The category of algebras for the corresponding familial monad on $c\set$ is given by $m\alg_0$. 

A monad morphism $(p,\alpha) \colon (c,m) \from (d,n)$ induces a functor $n\alg_a \to m\alg_a$ sending an $n$-module $d \bifrom[x] a$ to $p \circ x=p\tri_dx$, which has an $m$-module structure given by the composite in \eqref{eqn.inducedalg}.
\begin{equation}\label{eqn.inducedalg}
\begin{tikzcd}
c \dar[biml-bimr,swap]{m} & \lar[biml-bimr,""{name=S, below},swap]{p} d \dar[biml-bimr]{n} & \lar[biml-bimr,""{name=U, below},swap]{x} a \dar[biml-bimr]{a} \\
c & \lar[biml-bimr,""{name=T, above}]{p} d & \lar[biml-bimr,""{name=V, above}]{x} a
\arrow[Rightarrow,shorten=5,from=S,to=T,"\alpha"]
\arrow[Rightarrow,shorten=5,from=U,to=V]
\end{tikzcd}
\end{equation}
\end{definition}

While this diagram demonstrates formally that for $x$ an $n$-module $p \circ x$ is an $m$-module, we can use the right $\tri$-coclosure to unpack the precise form of this functor when $a=0$. By the universal property of the right coclosure, a 2-cell $\alpha \colon m \tri p \to p \tri n$ in $\ccatsharp$ corresponds to a 2-cell $\lens{m \tri p}{n} \to p$ which will illustrate how $p$ sends $m$-algebras to $n$-algebras in the style of familial functors. 

The data of a 2-cell $\lens{m \tri_c p}{n} \to p$ consists of a morphism $\alpha_1 \colon m(p(1)) \to p(1)$ of $c$-copresheaves along with natural maps in $d\set$, for each $M \in m(1)$ and $I \colon m[M] \to p(1)$ in $c\set$, of the form
\[
\alpha^\#_{M,I} \colon n\left(\colim_{z \in m[M]} p[I(z)]\right) \from p[\alpha_1(M,I \colon m[M] \to p(1))]
\]
which can be equivalently regarded as maps 
\[
n\left(\colim_{z \in m[M]} p[I(z)]\right) \from n\left(p[\alpha_1(M,I \colon m[M] \to p(1))]\right)
\]
of free $n$-algebras which commute with units and multiplications in a suitable sense. 

For an $n$-algebra $n(X) \to X$ in $d\set$, the $m$-algebra $p(X)=p\tri_dX$ is then given by
\[
p(X)_C \cong \sum_{I \in p_C(1)} \Hom_{d\set}\left(p[I],X\right) \cong \sum_{I \in p_C(1)} \Hom_{n\alg}(n(p[I]),X),
\]
for objects $C\in c(1)$. The definition of $m(p(X))$ unwinds to the copresheaf
\[\begin{aligned}
m(p(X))_C & \cong \sum_{M \in m_C(1)}\sum_{I \colon m[M] \to p(1)} \Hom_{d\set}\left(\colim_{z \in m[M]} p[I(z)],X\right) \\
& \cong \sum_{M \in m_C(1)}\sum_{I \colon m[M] \to p(1)} \Hom_{n\alg}\left(n\left(\colim_{z \in m[M]} p[I(z)]\right),X\right) \\
& \cong \sum_{M \in m_C(1)}\sum_{I \colon m[M] \to p(1)} \Hom_{n\alg}\left(\colim_{z \in m[M]} n\left(p[I(z)]\right),X\right),
\end{aligned}\]
and the $m$-algebra structure map $m(p(X)) \to p(X)$ acts by $\alpha_1$ on the indices of the sum and on each summand by precomposition with the algebra map 
\[
n\left(p[\alpha_1(M,I)]\right) \To{\alpha^\#_{M,I}} n\left(\colim_{z \in m[M]} p[I(z)]\right) \cong \colim_{z \in m[M]} n\left(p[I(z)]\right).
\]

These functors between categories of familial algebras therefore closely resemble familial functors between copresheaf categories, but with the additional data of an $m$-action which treats the arity $d$-copresheaves of $p$ as the corresponding free $n$-algebras, so as to lift the familial functor to map between categories of algebras. We therefore call them ``algebraic familial functors.'' 

\begin{definition}
For $m$ a familial monad on $c\set$ and $n$ a familial monad on $d\set$, an \emph{algebraic familial functor} is a functor $m\alg \from n\alg$ which commutes with an ordinary familial functor $c\set \From{p} d\set$ along the forgetful functors as in \eqref{eqn.algfam}.
\begin{equation}\label{eqn.algfam}
\begin{tikzcd}
m\alg \dar & \lar n\alg \dar \\
c\set \ar[phantom]{ur}{\circlearrowleft} & \lar{p} d\set
\end{tikzcd}
\end{equation}
In addition to the familial functor $c \bifrom[p] d$, the data of an algebraic familial functor consists of an $m$-algebra structure $m(p(X)) \to p(X)$ for all $n$-algebras $X$.
\end{definition}

As there must be a map $p(X) \to p(1)$ of $m$-algebras for all $n$-algebras $X$, we can deduce that the algebra structure on $p(X)$, whose component at the object $C$ of $c$ is a function
\[
\sum_{\substack{M \in m_C(1) \\ I \colon m[M] \to p(1)}} \Hom_{n\alg}\left(\colim_{z \in m[M]} n\left(p[I(z)]\right),X\right) \to \sum_{I \in p_C(1)} \Hom_{n\alg}(n(p[I]),X),
\]
is determined by a map $\alpha_1 \colon m(p(1)) \to p(1)$ in $c\set$ making $p(1)$ an $m$-algebra, along with for each $M \in m_C(1)$ a map 
\[
\alpha^\#_{M,I} \colon n\left(\colim_{z \in m[M]} p[I(z)]\right) \from n\left(p[\alpha_1(M,I)]\right)
\]
of free $n$-algebras such that $(p,\alpha_1,\alpha^\#)$ form a monad morphism from $n$ to $m$.

The arity $n$-algebras $n(p[I])$ and the maps between them imposed by $c$ have to be free so that the sets $\Hom_{n\alg}(n(p[I]),X)$ can be modeled by sets of the form $\Hom_{d\set}(p[I],X)$, which corresponds to the functor between categories of algebras descending to a well-defined functor between the underlying presheaf categories.

\begin{theorem}
Just as the bicategory $\ccatsharp$ is biequivalent to the 2-category of copresheaf categories, familial functors, and natural transformations, the $\EM(\ccatsharp)$ is biequivalent to the 2-category of copresheaf categories equipped with a familial monad, algebraic familial functors between categories of algebras, and natural transformations.
\end{theorem}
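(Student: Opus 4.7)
The plan is to lift the known biequivalence between $\ccatsharp$ and the 2-category of copresheaf categories, familial functors, and natural transformations level by level through the $\EM$ construction. On objects there is essentially nothing to check beyond what is already known: a formal monad $(c,m)$ in $\ccatsharp$ is precisely a familial monad on $c\set$ by \cref{sec.prafunctors}. On morphisms, the discussion preceding the theorem already exhibits both directions of the desired correspondence. Namely, via the universal property of the right $\tri$-coclosure, a monad morphism $(p,\alpha) \colon (c,m) \from (d,n)$ translates into a coherent $m$-algebra structure on $p(X) = p \tri_d X$ for each $n$-algebra $X$, and hence into an algebraic familial functor; conversely, any algebraic familial functor $n\alg \to m\alg$ whose underlying familial functor is $p$ is completely determined by a lift $\alpha_1 \colon m(p(1)) \to p(1)$ and compatible families $\alpha^\#_{M,I}$ assembling to a monad morphism $(p,\alpha)$ satisfying \eqref{eqn.monadmapunit} and \eqref{eqn.monadmapmult}.

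The hard part will be matching the 2-cells, since a 2-cell in $\EM(\ccatsharp)$ has the unusual form $\rho \colon p \Rightarrow q \tri_d n$ rather than the expected $p \Rightarrow q$. Given such a $\rho$, I would define the corresponding natural transformation at an $n$-algebra $x$ with action $\psi_x$ by whiskering to get $p \tri_d x \Rightarrow q \tri_d n \tri_d x$ and then applying $\psi_x$ to land in $q \tri_d x$; equation \eqref{eqn.monadtransf} is exactly what ensures that this composite is a morphism of $m$-algebras, and naturality in $x$ is then immediate. In the converse direction, given a natural transformation $\nu$ between algebraic familial functors, I would evaluate $\nu$ on the free $n$-algebras $n \tri_d y$ with $y$ a representable $d$-copresheaf and precompose with $\eta^n$ to extract a 2-cell $p \Rightarrow q \tri_d n$; the mutual inverse property then follows from the unit law for $n$-actions and the universal property of free algebras. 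I expect the main subtlety to lie in checking that vertical composition \eqref{eqn.monadtransfvert} and horizontal composition \eqref{eqn.monadtransfhor} in $\EM(\ccatsharp)$ correspond to the evident operations on natural transformations, in particular that the $\mu^n$ appearing in both composites precisely encodes the collapse of two iterated $n$-actions into one on the side of algebraic functors.

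With these three correspondences in hand, the remaining verifications are essentially formal: identity morphisms in $\EM(\ccatsharp)$ correspond to identity algebraic familial functors, identity 2-cells $p \tri_d \eta^n$ to identity natural transformations, and the unitors and associators inherited from $\ccatsharp$ transport without difficulty. To assemble everything into a biequivalence, I would appeal to the 2-functoriality of the $\EM$ construction on bicategories established in \cite{lack2002formal}, which lifts the already-known biequivalence between the underlying bicategory of $\ccatsharp$ and the 2-category of copresheaf categories and familial functors to a biequivalence of their $\EM$-completions.
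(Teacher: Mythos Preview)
Your proposal is correct, and the overall architecture matches the paper's: both rely on Lack--Street's results in \cite{lack2002formal} to transport the known biequivalence through the $\EM$ construction, and both use the discussion preceding the theorem to identify morphisms with algebraic familial functors. The one genuine difference is in how the 2-cells are handled. You construct the correspondence between 2-cells $\rho\colon p\Rightarrow q\tri_d n$ and natural transformations by hand---whiskering with an $n$-algebra and postcomposing with its action map, then inverting via free algebras and $\eta^n$. The paper bypasses this entirely by embedding the 2-category of copresheaf categories, familial functors, and natural transformations as a sub-2-category of $\lgcat$ and invoking the concrete description of $\EM(\lgcat)$ from \cite[Section~2.1]{lack2002formal}, where the identification of 2-cells with natural transformations between functors on algebra categories is already established. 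Your approach is more explicit and makes visible exactly why the unusual shape of the 2-cells in $\EM(\ccatsharp)$ corresponds to natural transformations; the paper's approach is shorter but treats that identification as a black box borrowed from the ambient $\EM(\lgcat)$.
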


\begin{proof}
The 2-category of copresheaf categories, familial functors, and natural transformations is a sub-2-category of $\lgcat$, so applying the $\EM$ construction we get a sub-2-category of $\EM(\lgcat)$ as described in \cite[Section 2.1]{lack2002formal}. Therefore, following that description the objects are categories $c$ equipped with a familial monad $m$ on $c\set$, morphisms are functors between the corresponding categories of algebras which commute with a functor between the underlying copresheaf categories, and 2-cells are natural transformations of those functors. By \cite[Section 2.2]{lack2002formal} and the discussion above of the functors on categories of algebras induced by monad morphisms, the morphisms are precisely the algebraic familial functors.
\end{proof}

\begin{remark}
To see more clearly why the 2-cells of $\EM(\ccatsharp)$ are preferable to the simpler transformations of monad morphisms of \cite{Street:1972a}, consider the data of a natural transformation between algebraic familial functors $\rho \colon (p,\alpha),(q,\alpha) \colon n\alg \to m\alg$. Much like a transformation of ordinary familial functors, it consists of a morphism $\rho_1 \colon p(1) \to q(1)$ of $c$-copresheaves and natural maps from the arities of $q$ to those of $p$, but $\rho_1$ must commute with the $m$-actions on $p(1)$ and $q(1)$, the maps on arities can be any maps of $n$-algebras $\rho^\#_I \colon n(p[I]) \from n(q[\rho_1(I)])$, and these maps on arities commute with those of $\alpha,\beta$. In other words, from the definition of natural transformations of familial functors, replace the map of copresheaves on positions with a map of $m$-algebras, replace the maps of copresheaves on directions with maps of free $n$-algebras, and ensure that these maps on directions respect the algebra structure on the positions.

Without the algebra structure on $p(1),q(1)$ and the corresponding conditions on $\rho$, a morphism $\rho_1 \colon p(1) \to q(1)$ of $c$-copresheaves and natural morphisms $\rho\#_I \colon n(p[I]) \from n(q[\rho_1(I)])$ of $n$-algebras, or equivalently natural morphisms $n(p[I]) \from q[\rho_1(I)]$ of $d$-copresheaves, is precisely the data of a morphism $\lens{p}{n} \to q$ of $(c,d)$-bicomodules. Therefore, by the definition of the right-coclosure (\cref{coclosure}), it is also precisely the data of a morphism $p \to q \tri n$. 
\end{remark}

Before getting into nerves and higher categories, we begin with a more practical example.

\begin{example}[Normalizing a finite probability distribution]
Consider the monads $\List$ and $\lott$ corresponding to the terminal nonsymmetric operad and the nonsymmetric probability operad respectively; they are carried by
\[
\List\coloneqq\sum_{N:\nn}\yon^{\ul{N}}
\qqand
\lott\coloneqq\sum_{N:\nn, P:\Delta_N}\yon^{\ul{N}}
\]
where $\Delta_N$ is the set of probability distributions on $N$ elements. There is clearly a map $\alpha\colon\lott\to\List$, which forgets the probability distribution. Taking $c=\yon=d$, this gives a map of the form $(\yon,\alpha)\colon(\yon,\lott)\from(\yon,\List)$ in $\EM(\ccatsharp)$. For any polynomial monad $t$, let $\ul{t}$ denote the object $(\yon,t)$ in $EM(\ccatsharp)$.

The process of \emph{normalization} gives a map $(r,\nu)\colon\ul{\List}\from\ul{\lott}$ in $\EM(\ccatsharp)$, where $r\coloneqq\rr_{>0}\,\yon+1$. Indeed, it can be constructed as the composite of two maps
\[
\ul{\List}\From{(\yon+1,\rho)}\ul{\yon\List}\From{(r',\nu')}\ul{\lott}
\]
where $\yon\List$ is the monad of nonempty lists and $r'\coloneqq\rr_{>0}\,\yon$.

The map $\rho\colon\List\tri(\yon+1)\to(\yon+1)\tri\yon\List$ is given by removing all the exceptions from a list and returning the result if it is nonempty and an exception otherwise. This can be understood as the functor from semigroups to monoids given by adding a fresh identity element, say $e$. Given a semigroup $G$ and list, each of whose elements is either $e$ or an element of $G$, remove all the $e$'s and if the resulting list is empty, return $e$, otherwise return the product as defined by the semi-group structure.

The map $\nu'\colon\yon\List\tri r'\to r'\tri\lott$ is given on positions by
\[\nu(x_1,\ldots,x_n)\coloneqq \left(x,\left(\frac{x_1}{x},\ldots,\frac{x_n}{x}\right)\right),
\qquad\text{where}\quad
x\coloneqq x_1+\cdots+x_n
\]
and on directions by identity. It is easy to check that both satisfy \cref{eqn.monadmapunit,eqn.monadmapmult}.

The normalization map $(\rr_{>0}\,\yon+1,\nu)\colon\ul{\List}\from\ul{\lott}$ induces a map $\lott\alg_0\to\List\alg_0$. So for example, if $C$ is a convex set and hence has a $\lott$-algebra structure, there is an induced monoid with underlying set $C^\tn{cone}\coloneqq\rr_{>0}\times C+1$. Its unit is the distinguished (``cone'') point $1\in C^\tn{cone}$, and the product $(x,c)*(x',c')$ is $(x+x',\frac{x}{x+x'}c+\frac{x'}{x+x'}c')$. The notation $\frac{x}{x+x'}c+\frac{x'}{x+x'}c'$ is the use of the convex structure on $C$.
\end{example}

\begin{example}[The (inert) nerve functor]
The composite of the nerve functor from $m$-algebras to copresheaves on $\Theta_m\op$ with the forgetful functor to copresheaves on $(\Theta_m^0)\op$ is an algebraic familial functor given by the monad morphism $\left(\lens{m}{m},\id\right) \from (c,m)$
\[
\nerve^0 \coloneqq \sum_{M \in \lens{m}{m}(1)} \{M\}\yon^{m[M]}=\sum_{M \in m(1)} \{M\}\yon^{m[M]}
\]
As the target is an identity monad, there is no additional structure to specify. 
The need for this functor to land in copresheaves on $(\Theta_m^0)\op$ rather than $\Theta_m\op$ comes from the requirement that the maps between the arities $m[M]$ are maps of copresheaves on $c$ rather than the corresponding free $m$-algebras; $m[M]$ is functorial from $\Theta_m$ only in $m$-algebras, whereas when restricted to $\Theta_m^0$ it is functorial in $c\set$.
\end{example}

\begin{example}[Category of elements of a $c$-copresheaf]
For any category $c$ there is an algebraic familial functor $(g,\path) \from (c,\id)$ given by
\[
el \coloneqq c+c_\ast \cong \{\vertex\}\sum_{C \in c(1)} \yon^{c[C]} + \{\edge\}\!\!\!\!\!\sum_{C \in c(1), \;\; f \in c[C]} \yon^{c[C]},
\]
where the source and target of $(c,f)$ are respectively $c$ and the codomain of $f$ in $c$. Given a copresheaf $X$ on $c$, the familial functor to graphs associated to $el$ produces the graph whose vertices are all of the elements of $X$ and with an edge $v \to X_f(v)$ for each $v \in X_C$ and $f \in c[C]$.

$el(1)$ is the underlying graph of the category $c$, so the category structure on $c$ gives it a canonical choice of $\path$-algebra structure sending $(f_1,...,f_n)$ to $f_n \circ \cdots \circ f_1$. We then need to specify for any sequence of composable morphisms $C_0 \to C_1 \to \cdots \to C_n$ in $c$ a map
\[
\colim\left(c[C_0] = c[C_0] \from c[C_1] = c[C_1] \from \cdots = c[C_{n-1}] \from c[C_n]\right) \from c[C_0],
\]
but the colimit on the left agrees with $c[C_0]$ so this map can be taken to be an isomorphism. The resulting functor from $c\set$ to categories is precisely the category of elements functor.
\end{example}


We will now devote our attention to formal monads in $\EM(\ccatsharp)$, \emph{algebraic familial monads}, which act on the category of algebras for a familial monad by an algebraically familial endofunctor equipped with unit and multiplication transformations. This class of monads includes many monads on the category $\smcat$ which are common in the literature, though the requirement that the arity algebras are free excludes certain functors which would otherwise appear to be ``familial.''

\begin{example}[Free symmetric monoidal category on a category]\label{sm}
We now show how the free symmetric monoidal category monad on $\smcat$ is an algebraic familial monad on $\path$-algebras,  modeled as a monad on $(g,\path)$ in $\EM(\ccatsharp)$. The idea is that the underlying $(g,g)$-bicomodule $\sm$ will contain only the operations of a symmetric monoidal product and unit, as it is to be applied to a category which already has compositions and identities of morphisms (hence the notation $\sm$ which drops the ``c'' from $\smc$ which also encoded category structure). As in \cref{smc}, for simplicity we will consider symmetric strict monoidal categories, but the construction is easily modified to general symmetric monoidal categories in a similar manner, adding in operations for every binary association and coherences between them.

The $(g,g)$-bicomodule $\sm$ is defined as
\[
\sm \coloneqq \{\vertex\}\sum_{N \in \nn} \yon^{\ul N} + \{\edge\}\sum_{N \in \nn, \;\; \sigma \in \Sigma_N} \yon^{\vec 1 + \scdots{N} + \vec 1},
\]
where the requisite source and target inclusions $\ul N \to \vec 1 + \scdots{N} + \vec 1$ send a vertex $i \in \ul N$ to respectively the source vertex of the $i$th edge and the target vertex of the $\sigma(i)$th edge.

When regarded as a familial endofunctor on graphs, the operations outputting vertices produce a new vertex (the $N$-ary  tensor product) for each list of $N$ vertices, while the operations outputting edges extend the $N$-ary tensor product to edges while also adding symmetries with an edge to the tensor product of any permutation of the target vertices. When this endofunctor is applied to a category, applying the $(N,\sigma)$ operation to $N$ identity morphisms will produce the symmetry morphisms of a symmetric monoidal category, and applying the operation in general will produce tensor products of morphisms composed with symmetry morphisms.\footnote{The symmetry morphisms could be added separately as operations outputting edges with ariries $\ul N$, but then their composites with tensor products of morphisms would still need to be included to define the monad morphism structure.} The equations governing these operations when applied to a category will be imposed by the monad morphism structure on $\sm$.

Note that $\sm(1)$ is the graph with vertex set $\nn$ and, for each vertex $N$, a loop for each $\sigma \in \Sigma_N$. It has a $\path$-algebra structure $\alpha_1 \colon \path(\sm(1)) \to \sm(1)$ corresponding to the category of finite sets of the form $\ul N$ and permutations with the usual identities and compositions. In particular, for $I \colon \vec n \to \sm(1)$ where $I$ sends all vertices to $N \in \sm_v(1)$ and the $i$th edge to the permutation $\sigma_i \in \Sigma_N$, $\alpha_1(n,I) = (N,\sigma_n \circ \cdots \circ \sigma_1) \in \sm_e(1)$.

To exhibit $\sm$ as a monad morphism then requires morphisms of $\path$-algebras
\[
\alpha^\#_{n,I} \colon \path\left( \colim_{v \in \vec n} \sm[I(v)] \right) \from \path(p[N,\sigma_n \circ \cdots \circ \sigma_1]),
\]
for $I \colon \vec n \to \sm(1)$ as described above. Such a morphism amounts to a functor from $\path(\vec 1 + \scdots{N} + \vec 1)$, the disjoint union of $N$ walking arrow categories, to $\path(\vec n + \scdots{N} + \vec n)$, the disjoint union of $N$ copies of the ordinal category with $n$ adjacent generating morphisms (so $n+1$ objects). This functor the ``cocomposition'' functor given by sending each walking arrow to the maximal composite morphism of the corresponding ordinal. The monad morphism equations here are satisfied as $\alpha^\#_{1,I}$ is the identity functor on $\path(p[N,\sigma])$ and the composite $\path(\vec 1) \to \path(\vec n) \to \path(\vec m)$ is a cocomposition, where the first factor is a cocomposition functor and the second is a colimit of cocompositions.

The monad morphism structure $\alpha$ on $\sm$ encodes how $\sm$ sends a $\path$-algebra to a $\path$-algebra by indicating how to take $n$-ary composites in the free symmetric monoidal category on a category: when composing a sequence of $n$-many $N$-ary tensor products of morphisms each composed with a symmetry morphism, commute all of the symmetry morphisms to the end and compose them with the tensor products of the $N$ separate composite morphisms.

We now define a monad structure on $\sm$ in $\EM(\ccatsharp)$, where the unit and multiplication turn out to be ordinary transformations of monad morphisms rather than requiring the generalization thereof in \cref{EM}. This is not surprising though, as it will be the case for any cartesian monad whose unit and multiplication transformations are isomorphisms on arities and therefore do not need the flexibility of being Kleisli maps rather than maps on the underlying copresheaves. The unit $\eta^{\sm} \colon g \to \sm \To{\sm \tri \eta^{\path}} \sm \tri \path$ picks out the operations $1 \in \sm_v(1)$ and $(1,\id) \in \sm_e(1)$, and the multiplication $\mu^{\sm} \colon \sm \tri_g \sm \to \sm \To{\sm \tri \eta^{\path}} \sm \tri \path$ sends $(N;M_1,...,M_N)$ to $M_1 + \cdots + M_N$ and $((N,\sigma);(M_1,\sigma_1),...,(M_N,\sigma_N))$ to $(M_1 + \cdots + M_N,\sum_i \sigma_{\sigma(i)})$. It is straightforward to check that these assignments are unital and associative.

To see that this is indeed the free symmetric monoidal category monad requires the observation that to build a symmetric monoidal category from an ordinary category, it suffices to add in associative $N$-ary tensor products of all objects and morphisms and natural symmetry isomorphisms; this functor does so using the observation that any morphism is then uniquely a composite of a tensor product of morphisms with a symmetry isomorphism, as by naturality these can be commuted past each other allowing any composite of such morphisms to be factored into this form.
\end{example}

\begin{example}[Free categorical algebras of an operad]
The previous example can be generalized to produce a monad on $(g,\path)$ for any symmetric operad $\O$, inspired by the construction in \cite[Construction 3.4, Theorem 3.7]{weber2015operads} in the case of a unique color. Here the (previously singleton) set of operations outputting a vertex with arity $\ul N$ is $\O_N$ and the set of operations outputting an edge with arity $\vec 1 + \scdots{N} + \vec 1$, source operation $O \in \O_N$, and target operation $O' \in \O_N$ is the set of permutations $\sigma \in \Sigma_N$ such that $\O_\sigma(O) = O'$. Algebras for this algebraic familial monad on $\smcat$ are categories with functorial $N$-ary operations for each $O \in \O_N$ such that commutativity properties encoded in the symmetric operad $\O$ are weakened to symmetry isomorphisms in a category much like the relaxation from commutative monoids to symmetric monoidal categories in the previous example. Unlike \cref{operads}, we no longer need to assume $\O$ is $\Sigma$-free in order to model it as an algebraic familial monad on categories.
\end{example}


Lack and Street in \cite[Section 3]{lack2002formal} call monads in $\EM(\K)$ ``wreaths'' or ``extended distributive laws,'' noting that they generalize distributive laws in the sense that a monad $(c,m)$ in a bicategory $\K$ along with a monad morphism $n \colon (c,m) \to (c,m)$ and 2-cells in $\EM(\K)$ endowing this endomorphism with the structure of a monad amount to 2-cells in $\K$ of the form $mn \to nm$, $nn \to nm$, and $1 \to nm$ satisfying several equations. This is very nearly the data of a distributive law between monads $m,n$ in $\K$, but $n$ is not quite a monad in $\K$, but rather a ``monad up to $m$'' in a suitable sense (which in $\smcat$ amounts to being an endofunctor which lifts to a monad on the category of $m$-algebras). Nevertheless, much like a distributive law this data determines the structure of a monad on the composite $nm$ whose algebras agree with algebras for the monad derived from $n$ on the category of $m$-algebras.

In $\ccatsharp$, this result demonstrates that algebras for any algebraic familial monad $n$ on a category of $m$-algebras are just as well algebras for an ordinary familial monad $n \tri_c m$ on $c\set$.

\begin{example}[$\smc$ as the composite of $\sm$ and $\path$]\label{smwreath}
Applying this to the algebraic familial monad $\sm$, we recover the familial monad $\smc$ on graphs (as in, the monad on $g$ in $\ccatsharp$) from \cref{smc}. In particular it is straightforward to check from the definitions that $\smc = \sm \tri_g \path$, and as expected both $\sm$ as an algebraic familial monad on $\path$-algebras and $\smc$ as an ordinary familial monad on $g\set$ have symmetric monoidal categories as algebras. This example turns out to be an ordinary distributive law in $\ccatsharp$, which will generally be the case for cartesian monads such as $\sm$.
\end{example}


\printbibliography

\end{document}